\def\@tocline#1#2#3#4#5#6#7{\relax
  \ifnum #1>\c@tocdepth 
  \else
    \par \addpenalty\@secpenalty\addvspace{#2}%
    \begingroup \hyphenpenalty\@M
    \@ifempty{#4}{%
      \@tempdima\csname r@tocindent\number#1\endcsname\relax
    }{%
      \@tempdima#4\relax
    }%
    \parindent\z@ \leftskip#3\relax \advance\leftskip\@tempdima\relax
    \rightskip\@pnumwidth plus4em \parfillskip-\@pnumwidth
    #5\leavevmode\hskip-\@tempdima
      \ifcase #1
       \or\or \hskip 1em \or \hskip 2em \else \hskip 3em \fi%
      #6\nobreak\relax
    \dotfill\hbox to\@pnumwidth{\@tocpagenum{#7}}\par
    \nobreak
    \endgroup
  \fi}
\theoremstyle{definition}
\DeclareMathOperator{\diam}{diam}
\newcommand{\length}{\textrm{length}}
\tikzstyle{blackNode}=[fill=black, draw=black, shape=circle]
\newtheorem*{proposition*}{Proposition}
\newtheorem*{notation*}{Notation}
\newtheorem*{lemma*}{Lemma}
\newtheorem*{definition*}{Definition}
\newtheorem{theorem}{Theorem}
\newtheorem{corollary}[theorem]{Corollary}
\newtheorem{lemma}[theorem]{Lemma}
\newtheorem{proposition}[theorem]{Proposition}
\newtheorem*{remark*}{Remark}
\newtheorem*{question*}{Question}
\newtheorem*{example*}{Example}
\newtheorem*{claim*}{Claim}
\newtheorem*{conjecture*}{Conjecture}
\newenvironment{manualtheorem}[1]{%
  \manualtheoreminner
}{\endmanualtheoreminner}
\newcommand\bolden[1]{{\boldmath\bfseries#1}}
\def\Z{{\mathbb Z}}
\def\N{{\mathbb N}}
\def\Q{{\mathbb Q}}
\def\R{{\mathbb R}}
\def\hyp{{\mathbb H}}
\def\XX{{\mathcal{X}}}
\newcommand{\sol}{\textrm{Sol}}
\newcommand{\dl}{\textrm{DL}}
\newcommand{\summ}{\textrm{sum}}
\newcommand{\con}{\textrm{Con}}
\DeclarePairedDelimiter\absval{\lvert}{\rvert}
\title{Embeddings of Trees, Cantor Sets and Solvable Baumslag--Solitar Groups}
\author{Patrick S. Nairne}
\email{nairne@maths.ox.ac.uk}
\address{Mathematical Institute, University of Oxford}
\begin{document}

\raggedbottom

\maketitle

\begin{abstract}
We characterise when there exists a quasiisometric embedding between two solvable Baumslag--Solitar groups. This extends the work of Farb and Mosher on quasiisometries between the same groups. More generally, we characterise when there can exist a quasiisometric embedding between two treebolic spaces. This allows us to determine when two treebolic spaces are quasiisometric, confirming a conjecture of Woess. The question of whether there exists a quasiisometric embedding between two treebolic spaces turns out to be equivalent to the question of whether there exists a bilipschitz embedding between two symbolic Cantor sets, which in turn is equivalent to the question of whether there exists a rough isometric embedding between two regular rooted trees. Hence we answer all three of these questions simultaneously. It turns out that the existence of such embeddings is completely determined by the boundedness of an intriguing family of integer sequences.
\end{abstract}

\tableofcontents

\section{Introduction}

In his speech to the ICM in 1983 \cite{ICM}, Gromov proposed the vast project of classifying finitely generated groups up to quasiisometry. His line of thought leads to the question of whether an algebraic property $P$ of finitely generated groups can be characterised by some other geometric (i.e. quasiisometry invariant) property. If this is the case then we say that the class of groups satisfying $P$ is \textit{quasiisometrically rigid}. It suggests that the algebraic property $P$ in fact corresponds to some geometric feature of the groups. For example, Gromov \cite{GROMOV} proved that virtual nilpotency is invariant under quasiisometries and corresponds to the geometric property of polynomial growth. In contrast, Erschler \cite{ERSCHLER} proved that virtual solvability is not geometric; there exist groups $G$ and $H$ where $G$ is solvable and $H$ is not virtually solvable such that $G$ and $H$ have a common Cayley graph. Nevertheless, we can limit our focus to subclasses of solvable groups. Farb and Mosher \cite{FM} proved that the solvable Baumslag--Solitar groups $BS(1,m)$ exhibit a fascinating rigidity; $BS(1,m)$ is quasiisometric to $BS(1,n)$ if and only if $m,n$ are powers of a common integer (which holds if and only if they are commensurable). This paper generalises this result in two ways. First, the result is extended to the \textit{treebolic spaces} $HT(p,q)$; in their paper, Farb and Mosher utilise the fact that $BS(1,m)$ acts cocompactly and isometrically on $HT(m,m)$. Treebolic spaces were introduced by Bendikov, Saloff--Coste, Salvatori and Woess in \cite{tree1} and they developed the theory further in \cite{tree2, tree3}. We characterise when treebolic spaces are quasiisometric (\Cref{thm2}), confirming a conjecture of Woess \cite{WOESS}. Second, we study a stronger notion of rigidity by providing necessary and sufficient conditions for the existence of a quasiisometric embedding of one treebolic space into another (\Cref{thm1}). In particular, we prove that $BS(1,m)$ quasiisometrically embeds into $BS(1,n)$ if and only if $m,n$ are powers of a common integer. 

\subsection{Statement of results}

Let $p \in \N_{\geq 2}$ and let $q > 1$. We are primarily interested in three classes of metric space: \textit{regular rooted trees} $R(p,q)$, \textit{symbolic Cantor sets} $\Z(p,q)$ and \textit{treebolic spaces} $HT(p,q)$. These can be described as follows.

\begin{itemize}
\item Let $R(p,q)$ denote the rooted metric tree with all edges of length $\log(q)$ and with valency $p+1$ at every vertex apart from the basepoint which has valency $p$.
\item Let $\Z(p,q) = \{ 0, 1, ..., p-1 \}^{\N}$ be the space of infinite sequences on $p$ letters. $\Z(p,q)$ admits a metric $\rho$ making it a metric space of diameter $1$. See Section 2.
\item Let $T(p,q)$ be the metric tree with all edges of length $\log(q)$ and with valency $p+1$ at every vertex. Roughly speaking, the treebolic space $HT(p,q)$ is formed by fixing height functions on $T(p,q)$ and on the hyperbolic plane $\hyp^2$ and then gluing horostrips of $\hyp^2$ onto every edge of $T(p,q)$ in a height-preserving manner. One can also think of $HT(p,q)$ as being composed of infinitely many copies of $\hyp^2$ glued together along horoballs into the shape of a tree. See Section 2 for a rigorous definition.
\end{itemize}

We will be interested in three different types of embedding between these metric spaces: rough isometric embeddings, bilipschitz embeddings and quasiisometric embeddings. Fix a pair of metric spaces $X$, $Y$ and a pair of constants $K \geq 1$, $C \geq 0$. We define $(K,C)$-quasiisometric embeddings and $(K,C)$-quasiisometries in the usual manner. A $(K,0)$-quasiisometric embedding $f: X \rightarrow Y$ is a \textit{$K$-bilipschitz embedding}. A $(K,0)$-quasiisometry $f: X \rightarrow Y$ is a \textit{$K$-bilipschitz homeomorphism}. A $(1,C)$-quasiisometric embedding $f: X \rightarrow Y$ is a \textit{$C$-rough isometric embedding}. A $(1,C)$-quasiisometry $f: X \rightarrow Y$ is a \textit{$C$-rough isometry}.

We will simultaneously answer three closely connected questions: when does there exists a rough isometric embedding $R(p,q) \rightarrow R(p',q')$? When does there exists a bilipschitz embedding $\Z(p,q) \rightarrow \Z(p',q')$? And, finally, when does there exist a quasiisometric embedding $HT(p,q) \rightarrow HT(p',q')$? Remarkably, the existence of these embeddings is intimately related to the boundedness of a certain integral sequence $\XX = \XX(p,q,p',q')$ which can be defined as follows. 

\begin{itemize}
\item Consider the non-negative number line $\R_{\geq 0}$ and imagine marking each non-negative multiple of $\log(q)$ in blue and each non-negative multiple of $\log(q')$ in red. We begin with a single pebble in our hand. We then imagine walking forwards along the number line from $0$ obeying the following rule as we go: each time we pass a blue, we multiply the amount of pebbles in our possession by $p$; each time we pass a red, we divide our pebbles evenly into $p'$ groups and keep only one of the larger groups (e.g. if $p' = 3$ then we divide 8 pebbles into 3,3,2 and keep only 3 pebbles); if we pass a red and a blue simultaneously then we first multiply our pebbles by $p$ and only then do we divide them by $p'$ and keep one of the larger groups as before. The changing quantity of pebbles in our possession as we walk along the number line forms the integral sequence $\XX(p,q,p',q')$. For a formal definition, see the definition of the more general class of sequences $\XX((p_n),(q_n),(p_n'),(q_n'))$ in Section 3. 
\end{itemize}

Consider the following pair of number theoretic conditions on $p,q,p',q'$.
\begin{enumerate}[label=(C\arabic*)] 
    \item $\log(p)/\log(q) < \log(p')/\log(q')$;
    \item $\log(p)/\log(q) = \log(p')/\log(q')$ and $p, p'$ are powers of a common integer. 
\end{enumerate}

We prove the following series of equivalences. 

\begin{theorem} \label{thm1}
The following are equivalent. 
\begin{enumerate}[label=(A\arabic*)] 
    \item Either (C1) or (C2) holds;
    \item $\XX(p,q,p',q')$ is bounded;
    \item There exists a rough isometric embedding $R(p,q) \rightarrow R(p',q')$;
    \item There exists a bilipschitz embedding $\Z(p,q) \rightarrow \Z(p',q')$;
    \item There exists a quasiisometric embedding $HT(p,q) \rightarrow HT(p',q')$.
\end{enumerate}
\end{theorem}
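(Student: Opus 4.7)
The plan is to prove the theorem as a cycle of implications linking the number-theoretic conditions (A1) and (A2) to the three geometric conditions (A3), (A4), (A5). Concretely, I would organise the proof as $\text{(A1)} \Leftrightarrow \text{(A2)} \Rightarrow \text{(A4)} \Leftrightarrow \text{(A3)} \Leftrightarrow \text{(A5)} \Rightarrow \text{(A2)}$, with the final arrow being the technical heart of the argument.

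\textbf{Number-theoretic step.} I would settle (A1) $\Leftrightarrow$ (A2) by directly analysing the pebble-counting process defining $\XX$. Over an interval $[0,N]$ we cross roughly $N/\log q$ blue marks (each multiplying by $p$) and $N/\log q'$ red marks (each dividing by $p'$ and keeping the largest group), so the leading behaviour of $\log \XX(N)$ is controlled by $N(\log(p)/\log(q) - \log(p')/\log(q'))$. When the coefficient is negative we are in case (C1) and $\XX$ is bounded; when positive it grows unboundedly. In the equality case the leading terms cancel and the behaviour is driven by rounding discrepancies in the map $k \mapsto \lceil k/p' \rceil$; an arithmetic analysis shows these errors stay bounded if and only if $p$ and $p'$ are powers of a common integer, which is exactly (C2).

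\textbf{Equivalences among the geometric conditions.} For (A3) $\Leftrightarrow$ (A4), the Gromov boundary of $R(p,q)$ is naturally identified with $\Z(p,q)$ via visual metrics, and rough isometric embeddings of the trees correspond to bilipschitz embeddings of their boundaries (the edge length $\log q$ in the tree is precisely the log of the scaling factor between consecutive cylinder levels of the Cantor set). For the link with (A5), I would exploit the fibred structure of treebolic space as hyperbolic sheets glued along horocycles over a copy of $T(p,q)$: the tree $R(p,q)$ embeds quasiisometrically into $HT(p,q)$ as a vertical section, and conversely a quasiisometric embedding of treebolic spaces should preserve the horocyclic and branching structures up to bounded error, hence induce a rough isometric embedding of the underlying trees. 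The constructive implication (A2) $\Rightarrow$ (A4) is then obtained by using the boundedness of $\XX$ to explicitly build a tree map between the rooted trees underlying the two Cantor sets: at every blue level of the source the $p$ children can be matched against cylinders of the target near the corresponding red level, with the pebble count controlling the packing; these matchings assemble into a bilipschitz map on $\Z(p,q)$.

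\textbf{The main obstacle} is the reverse implication (A5) (and hence (A3)) $\Rightarrow$ (A2). A quasiisometric or rough isometric embedding can a priori behave irregularly at small scales, so extracting an integer-valued counting obstruction is delicate. My strategy would be to restrict the embedding to a geodesic ray in the domain and track how the combinatorial branching of $R(p,q)$ is distributed in the target at each level: at depth $n\log q$ the source has $p^n$ distinct rays whose images must lie near depth $n\log q$ in the target and be pairwise separated by the rough-isometry constant, so they occupy at most the number of available cylinders near that depth. Iterating this counting while carefully propagating the integer rounding step by step reproduces exactly the sequence $\XX$, so the embedding forces $\XX$ to be bounded. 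Making the counting uniform across all scales, correctly handling the additive constant, and arguing that the integer rounding really must follow the specific $\lceil \cdot / p' \rceil$ rule dictated by the sequence is the technical crux of the whole theorem.
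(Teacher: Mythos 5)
Your overall architecture is close to the paper's, and you correctly identify the counting obstruction out of (A3)/(A5) as the crux: the paper proves exactly this by first replacing a rough isometric embedding of trees with a height-preserving, coarsely order-preserving one and then running a nested-subtree preimage count that reproduces $\XX_n$ as a lower bound. The boundary correspondence (A3) $\Leftrightarrow$ (A4) and the horocyclic extension giving (A3) $\Rightarrow$ (A5) also match the paper. One under-acknowledged dependency: the arrow out of (A5) is not soft. It requires the Farb--Mosher machinery showing that a quasiisometric embedding coarsely preserves the hyperbolic planes of $HT(p,q)$ and hence induces a bilipschitz map on the space of upper boundaries, and since you only have an embedding rather than a quasiisometry you must reprove their Lemma 5.1 without appealing to a coarse inverse; this is a genuine piece of work in the paper, not a formality.

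Two of your arrows have real gaps. First, your constructive step (A2) $\Rightarrow$ (A4) --- ``the pebble count controls the packing, and these matchings assemble into a bilipschitz map'' --- does not work as stated. Boundedness of $\XX$ only bounds the \emph{multiplicity} of the natural distributive waterfall map, and bounded multiplicity is compatible with two rays that diverge near the root having identical images forever; that destroys injectivity, hence any bilipschitz bound, on the boundary. The paper gives an explicit example (source tree with $p_1=6$, $q_1=2$ and $p_n=3$, $q_n=4$ thereafter, target $R(3,4)$) where $\XX$ is bounded yet \emph{no} distributive waterfall map is a rough isometric embedding, and it leaves ``bounded $\XX$ implies an embedding exists'' open in general. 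The correct route uses the arithmetic of (C1)/(C2) directly: under (C1) one can make the child assignments injective at every level, and under (C2) the trees are in fact rough isometric. Second, your direct arithmetic proof of (A1) $\Rightarrow$ (A2) in the equality case --- ``rounding errors stay bounded iff $p,p'$ are powers of a common integer'' --- is asserted rather than proved, and it is not an easy computation; the paper never proves this implication arithmetically, obtaining it instead by closing the cycle through the geometric construction ((C2) $\Rightarrow$ the trees are rough isometric $\Rightarrow$ $\XX$ bounded, via the contrapositive of the counting theorem). Only (A2) $\Rightarrow$ (A1) is done by hand, using a growth estimate in the strict-inequality case and an irrational-rotation/equidistribution argument in the equality case. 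Reorganising your cycle as (A1) $\Rightarrow$ (A3) $\Rightarrow$ (A2) $\Rightarrow$ (A1), with (A3) $\Leftrightarrow$ (A4) $\Leftrightarrow$ (A5) proved separately, sidesteps both problems.
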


Without much added difficulty, we also obtain the following. 

\begin{theorem} \label{thm2}
The following are equivalent. 
\begin{enumerate}[label=(B\arabic*)] 
    \item (C2) holds;
    \item $R(p,q)$ is rough isometric to $R(p',q')$;
    \item $\Z(p,q)$ is bilipschitz homeomorphic to $\Z(p',q')$;
    \item $HT(p,q)$ is quasiisometric to $HT(p',q')$. 
\end{enumerate}
\end{theorem}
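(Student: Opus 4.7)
The plan is to derive Theorem \ref{thm2} from Theorem \ref{thm1} together with explicit constructions of the three equivalences in the forward direction.

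For the reverse implications, suppose one of (B2), (B3), (B4) holds. Any rough isometry, bilipschitz homeomorphism, or quasiisometry admits an inverse of the same type (up to the usual bounded error), so each assumption furnishes embeddings of the relevant type in both directions. Theorem \ref{thm1} then applies to both ordered pairs $(p,q,p',q')$ and $(p',q',p,q)$, forcing each to satisfy (C1) or (C2). Since (C1) is strictly antisymmetric — it asserts $\log(p)/\log(q) < \log(p')/\log(q')$ — it cannot hold in both directions simultaneously, so (C2) must hold. This gives (B1).

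For the forward direction, I assume (C2) and write $p = d^a$, $p' = d^b$ for some common integer $d \geq 2$; the equality $\log(p)/\log(q) = \log(p')/\log(q')$ rewrites as $b \log(q) = a \log(q') =: L$. I then construct explicit equivalences in each setting. For the Cantor sets, grouping consecutive letters into blocks of length $a$ (respectively $b$) identifies both $\Z(p,q)$ and $\Z(p',q')$ with $\{0,\ldots,d-1\}^{\N}$ equipped with the metric induced by the common parameter $q^{1/a} = (q')^{1/b}$, yielding an isometry and in particular a bilipschitz homeomorphism. For the trees, both $R(p,q)$ and $R(p',q')$ have exactly $d^{abk}$ vertices at each common depth $kL$; one inductively chooses nested bijections between these level sets, compatible with the ancestor-descendant relation, and extends to intermediate depths by sending each vertex to an ancestor of the image of one of its descendants at the next common depth. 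This produces a rough isometry with additive error at most $L$. For the treebolic spaces, I lift the tree equivalence to a quasiisometry $HT(p,q) \to HT(p',q')$ by matching horostrip decompositions edge-by-edge, exploiting that the height coordinates on the glued copies of $\hyp^2$ align precisely at each common depth.

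The most delicate step is the treebolic case, where the hyperbolic geometry must be respected during the gluing; however, the fact that $L$ is a common multiple of both edge-lengths ensures that the relevant horocycles on $\hyp^2$ align coherently across matched edges, and the remaining interpolation between horostrips is a standard quasiisometric operation. The tree and Cantor set cases are essentially bookkeeping once the index arithmetic from (C2) is in place, while the reverse direction is a clean symmetry argument using nothing beyond Theorem \ref{thm1}.
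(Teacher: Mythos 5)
Your reverse direction is exactly the paper's argument (it appears as the remark following the theorem): equivalences give embeddings both ways, Theorem \ref{thm1} applies to both ordered pairs, and the antisymmetry of (C1) forces (C2). For the forward direction you take a genuinely more direct route. The paper proves (B1) $\implies$ (B2) by Proposition \ref{thm7} (factoring through $R(r, q^{1/s})$, essentially your block-recoding on the tree side), then obtains (B2) $\implies$ (B3) abstractly from the category isomorphism $\partial\colon \mathcal{C} \to \mathcal{D}$ of \Cref{isothm}, and (B2) $\implies$ (B4) by the chain $R(p,q) \to T(p,q) \to HT(p,q)$ (Propositions \ref{treethm}, \ref{hthm}, \ref{qiethm}). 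You instead build all three equivalences by hand from the arithmetic $p = d^a$, $p' = d^b$, $b\log(q) = a\log(q')$. What your approach buys is self-containedness and explicitness; what the paper's buys is reusability — the functor $\partial$ and the horocyclic extension are needed anyway for Theorem \ref{thm1}, so Theorem \ref{thm2} comes almost for free.

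Two soft spots to tighten. First, the block-recoding of $\Z(d^a,q)$ into $\{0,\dots,d-1\}^{\N}$ is not an isometry onto $\Z(d, q^{1/a})$: if two sequences first differ at a position not divisible by $a$, the recoded distance is $q^{-\lfloor N/a\rfloor}$ rather than $(q^{1/a})^{-N}$. The map is only $q$-bilipschitz — which is all you need, but say so. Second, the treebolic lift is under-argued: $HT(p,q)$ is built over the unrooted tree $T(p,q)$, so you must first double your rooted-tree rough isometry to $T(p,q) \to T(p',q')$ (the paper's \Cref{treethm}), and then verify that the depth-preserving fiberwise extension of a coarsely height-preserving rough isometry is a quasiisometry; this last point is not purely formal and is where the paper does real work (\Cref{qiethm}, via the comparison $d(\pi x,\pi y) \leq d(x,y) \leq d(\pi x,\pi y) + D(x,y)$ and the fact that the depth-preserving extension coarsely preserves $D$). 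Your construction does guarantee the tree map is height-preserving at the common depths $kL$, so the hypotheses of that estimate are met; you just need to supply the estimate itself rather than calling it standard.
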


\begin{remark*}
Some of the implications of \Cref{thm2} follow from \Cref{thm1}. If $R(p,q)$ is rough isometric to $R(p',q')$ then there exist rough isometric embeddings $R(p,q) \rightarrow R(p',q')$ and $R(p',q') \rightarrow R(p,q)$. Similar statements hold if $\Z(p,q)$ is bilipschitz homeomorphic to $\Z(p',q')$ or $HT(p,q)$ is quasiisometric to $HT(p',q')$. Hence, it follows from \Cref{thm1} that (B2), (B3), (B4) all imply (B1). 
\end{remark*}

The equivalence of (B1) and (B4), i.e. the quasiisometric classification of treebolic spaces, was conjectured by Woess (Question 2.15 \cite{WOESS}). 

\begin{corollary} \label{cor}
$HT(p,q)$ is quasiisometric to $HT(p',q')$ if and only if $\log(p)/\log(q) = \log(p')/\log(q')$ and $p, p'$ are powers of a common integer.
\end{corollary}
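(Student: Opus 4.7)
This corollary is immediate from Theorem \ref{thm2}: it is precisely the equivalence (B1) $\iff$ (B4) with condition (C2) written out. The plan is therefore to derive (B4) $\iff$ (C2) from the main theorems already in hand.

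For the forward direction, suppose $HT(p,q)$ is quasiisometric to $HT(p',q')$. Then the quasiisometry together with any quasi-inverse furnishes quasiisometric embeddings in both directions. I apply the implication (A5) $\Rightarrow$ (A1) of Theorem \ref{thm1} to each: each embedding forces the ordered pair $(p,q,p',q')$, respectively $(p',q',p,q)$, to satisfy either (C1) or (C2). The two candidate (C1) inequalities
\[
  \log(p)/\log(q) < \log(p')/\log(q') \quad \text{and} \quad \log(p')/\log(q') < \log(p)/\log(q)
\]
are mutually contradictory, and each contradicts (C2) as well, so at least one of the two embeddings must instead supply (C2). Since (C2) is symmetric in the two parameter sets, it then holds for $(p,q,p',q')$.

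For the converse, assume (C2): write $p = r^a$ and $p' = r^b$ for a common base $r \geq 2$, and note that $\log(p)/\log(q) = \log(p')/\log(q')$ forces $q' = q^{b/a}$. Here I plan to exhibit an explicit quasiisometry. Rescaling the metric on $HT(p,q)$ by a positive constant $\lambda$ produces an isometric copy of $HT(p, q^\lambda)$, so up to bilipschitz equivalence I may normalise so that $q = r^a$ and $q' = r^b$. In this form $HT(r^a, r^a)$ admits a cocompact isometric action of $BS(1, r^a)$ (the setup exploited by Farb--Mosher in \cite{FM}), and $HT(r^b, r^b)$ carries a cocompact isometric $BS(1, r^b)$-action. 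The common subgroup $BS(1, r^{ab})$ embeds with finite index into both $BS(1, r^a)$ and $BS(1, r^b)$ via the identification $BS(1, m) \cong \Z[1/m] \rtimes_m \Z$; hence $BS(1, r^{ab})$ acts cocompactly on both treebolic spaces. The Milnor--Schwarz lemma then yields a chain of quasiisometries
\[
  HT(r^a, r^a) \sim BS(1, r^a) \sim BS(1, r^{ab}) \sim BS(1, r^b) \sim HT(r^b, r^b),
\]
completing the construction.

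The deep step is of course Theorem \ref{thm1}, whose proof forms the technical core of the paper; granted that result, the remaining work is the short symmetry argument for the forward direction and the Milnor--Schwarz plus commensurability argument sketched above for the converse. I expect the only genuine subtlety to be the rescaling reduction, where one must verify that scaling the metric on $HT(p, q)$ by $\lambda$ indeed produces $HT(p, q^\lambda)$ isometrically; this requires simultaneously tracking the effect on the tree and the hyperbolic factors and confirming that the horostrip gluing is compatible with the rescaling.
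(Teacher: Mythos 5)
Your forward direction is exactly the paper's argument (the remark following Theorem \ref{thm2}): a quasiisometry and its coarse inverse give quasiisometric embeddings both ways, (A5)$\Rightarrow$(A1) applies to each, the two possible (C1) inequalities are incompatible, and (C2) is symmetric. That half is fine.

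The converse direction has a genuine gap at the normalisation step, and it is not a repairable technicality. You claim that rescaling the metric on $HT(p,q)$ by $\lambda$ produces an isometric (or at least quasiisometric) copy of $HT(p,q^\lambda)$. This is false, and in fact it contradicts the very corollary you are proving: rescaling a metric space by a constant is a bilipschitz homeomorphism, so if the claim held then $HT(p,q)$ would be quasiisometric to $HT(p,q^\lambda)$ for every $\lambda>0$; but the corollary says this forces $\log(p)/\log(q)=\log(p)/(\lambda\log(q))$, i.e.\ $\lambda=1$. What actually happens is that rescaling by $\lambda$ stretches the tree edges to length $\lambda\log(q)$ \emph{and simultaneously} replaces the horostrips of $\hyp^2$ by horostrips of the curvature $-1/\lambda^2$ plane, so the quasiisometry invariant $\log(p)/\log(q)$ (the ratio of the tree's branching rate to the hyperbolic expansion rate) is unchanged, whereas $HT(p,q^\lambda)$ as defined in the paper keeps the standard $\hyp^2$ and has invariant $\log(p)/(\lambda\log(q))$. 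Concretely, your scheme would reduce $HT(4,2)$ to $HT(4,4)\sim BS(1,4)$, but these are not quasiisometric since $\log 4/\log 2=2\neq 1$. Consequently the reduction to the Baumslag--Solitar model spaces is impossible whenever $\log(p)/\log(q)\neq 1$, and the Milnor--Schwarz/commensurability chain (which is itself correct: $BS(1,r^{ab})$ is indeed a finite-index subgroup of both $BS(1,r^a)$ and $BS(1,r^b)$) never gets off the ground.

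The paper's route for the converse avoids this by never changing $q$ alone: Proposition \ref{thm7} shows $R(p,q)$ is rough isometric to $R(p^s,q^s)$, i.e.\ both parameters are raised to the same power so that $\log(p)/\log(q)$ is preserved, and under (C2) this yields a rough isometry $R(p,q)\rightarrow R(p',q')$. This is then promoted to a rough isometry $T(p,q)\rightarrow T(p',q')$ (Proposition \ref{treethm}), made coarsely height-preserving (Proposition \ref{hthm}), and extended horocyclically to a quasiisometry $HT(p,q)\rightarrow HT(p',q')$ (Proposition \ref{qiethm}). If you want to keep a Baumslag--Solitar flavour, you could run your commensurability argument only in the special case $p=q$, $p'=q'$, but for the general corollary you need the tree-level construction.
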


The treebolic space $HT(p,q)$ is an example of a \textit{horocyclic product} (for a definition see Section 2 of \cite{WOESS}). Other examples include the Diestel--Lieder graphs $\dl(p,q)$ (introduced in \cite{DL}) and the Lie groups $\sol(p,q)$. The quasiisometric classifications of the Diestel--Lieder graphs and the Sol groups were provided by Eskin, Fisher and Whyte \cite{EFWnote, EFW1}.

Recall that, when $m \in \N_{\geq 2}$, the treebolic space $HT(m,m)$ is quasiisometric to the solvable Baumslag-Solitar group $BS(1,m) = \langle a,t | tat^{-1} = a^m \rangle$ with some word metric (see, for example, Section 3 of \cite{FM}). It follows from \Cref{cor} that $BS(1,m)$ is quasiisometric to $BS(1,n)$ if and only if $m, n$ are powers of a common integer; this result was originally proved by Farb and Mosher \cite{FM}. Indeed, the proof in this paper that (A5) implies (A4) essentially follows from their work; the only meaningful change required is to the proof of Lemma 5.1 of their paper. \Cref{thm1} gives us the following result, proving that the solvable Baumslag--Solitar groups obey an even stronger rigidity.

\begin{corollary} \label{cor3}
There exists a quasiisometric embedding $BS(1,m) \rightarrow BS(1,n)$ if and only if $m,n$ are powers of a common integer.
\end{corollary}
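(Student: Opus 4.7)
The plan is to deduce \Cref{cor3} directly from \Cref{thm1} via the standard quasiisometry between $BS(1,m)$ and the treebolic space $HT(m,m)$. The corollary is essentially a specialisation of the general embedding theorem to the diagonal case $p=q$, so no new ideas beyond \Cref{thm1} should be needed.

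First I would invoke the well-known fact, mentioned in Section 3 of \cite{FM} and recalled in the excerpt above, that for each $m \in \N_{\geq 2}$ the group $BS(1,m)$ acts cocompactly and isometrically on $HT(m,m)$. Consequently, any Cayley graph of $BS(1,m)$ with a word metric is quasiisometric to $HT(m,m)$. Since the class of quasiisometric embeddings is closed under pre- and post-composition with quasiisometries, the existence of a quasiisometric embedding $BS(1,m) \to BS(1,n)$ is equivalent to the existence of a quasiisometric embedding $HT(m,m) \to HT(n,n)$.

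Next I would apply the equivalence (A1) $\Leftrightarrow$ (A5) of \Cref{thm1} to the quadruple $(p,q,p',q') = (m,m,n,n)$. In this situation one has $\log(p)/\log(q) = 1 = \log(p')/\log(q')$, so the strict inequality (C1) fails, while (C2) collapses to the single condition that $m$ and $n$ be powers of a common integer. Combining this with the reduction from the previous paragraph yields the desired characterisation.

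Since the proof is merely an application of \Cref{thm1}, there is no substantial obstacle; all the real work is packed into that theorem. The only minor point of care is the range of parameters: \Cref{thm1} is formulated under $p \in \N_{\geq 2}$ and $q > 1$, so the argument applies whenever $m, n \geq 2$, which is the only case of interest (the degenerate case $m=1$ gives $BS(1,1) \cong \Z^2$, which can be treated separately by an elementary argument).
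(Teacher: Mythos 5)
Your proposal is correct and follows essentially the same route as the paper: identify $BS(1,m)$ with $HT(m,m)$ up to quasiisometry via the cocompact isometric action from \cite{FM}, then apply the equivalence (A1) $\Leftrightarrow$ (A5) of \Cref{thm1} with $(p,q,p',q')=(m,m,n,n)$, under which (C1) is vacuous and (C2) reduces to $m,n$ being powers of a common integer. Nothing further is needed.
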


\begin{remark*}
The content of \Cref{thm1} and \Cref{thm2} intersects with the work of Deng, Wen, Xiong and Xi \cite{BLE}. Indeed, the metric space $\Z(p,q)$ is a self-similar set satisfying the strong separation condition and is of Hausdorff dimension $\log(p) / \log(q)$. Theorem 1 of their paper then immediately gives us that (C1) implies (A4). Further, Theorem 2 of their paper gives us the following: if $\log(p) / \log(q) = \log(p') / \log(q')$ then (A4) holds if and only if (B3) holds. Now, Cooper proves in the appendix of \cite{FM} that if $\Z(m,m)$ is bilipschitz homeomorphic to $\Z(n,n)$ then $m,n$ are powers of a common integer. Consequently, once we have proved that (A5) implies (A4), \Cref{cor3} follows from \cite{BLE} and \cite{FM}. 
\end{remark*}

\Cref{thm1} tells us that unboundedness of the sequence $\XX(p,q,p',q')$ is an obstruction to the existence of a rough isometric embedding $R(p,q) \rightarrow R(p',q')$. It is possible to partially generalise this result to a far larger class of trees. 

\begin{itemize}
    \item Suppose we have sequences $(p_n)_{n \in \N}$, $(q_n)_{n \in \N}$ such that $p_n \in \N_{\geq 2}$ and $q_n \in \R_{>1}$. Suppose also that $p_n$ and $q_n$ are bounded sequences and that $\inf_n q_n > 1$. We can construct a rooted metric tree $R = R((p_n), (q_n))$ as follows. We start with a basepoint $b$. The basepoint $b$ has $p_1$ edges emanating from it of length $\log(q_1)$. The terminal vertices of these edges have $p_2$ edges emanating from them of length $\log(q_2)$. Then the terminal vertices of those edges have $p_3$ edges emanating from them of length $\log(q_3)$. Continuing in this way we will have constructed an infinite rooted tree $R((p_n),(q_n))$. We call a rooted tree constructed in this manner \textit{isotropic}. The regular rooted tree $R(p,q)$ is precisely the isotropic tree associated to the constant sequences $p_n = p$, $q_n = q$. 
\end{itemize}

\sloppy Given sequences $(p_n), (q_n), (p_n'), (q_n')$, one can define an integral sequence $\XX = \XX((p_n), (q_n), (p_n'), (q_n'))$ analogous to the sequence $\XX(p,q,p',q')$ defined above (see Section 3). We have the following.

\begin{theorem} \label{thm4}
Suppose we have a pair of isotropic trees $R((p_n), (q_n))$, $R((p_n'), (q_n'))$ such that $(p_n')$ is constant. If $\XX((p_n), (q_n), (p_n'), (q_n'))$ is unbounded then there does not exist a rough isometric embedding $R((p_n),(q_n)) \rightarrow R((p_n'),(q_n'))$.
\end{theorem}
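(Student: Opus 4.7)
The plan is a contradiction argument: suppose $f \colon R \to R'$ is a $C$-rough isometric embedding while $\XX := \XX((p_n),(q_n),(p_n'),(q_n'))$ is unbounded, write $b, b'$ for the roots of $R, R'$, set $m_0 = d(b', f(b))$, and let $p'$ denote the common value of $(p_n')$. The strategy is to realise the pebble game geometrically as an iterated pigeonhole on the images of source vertices in $R'$, and then close with a packing estimate that bounds $\XX$ uniformly.

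In synch with the formal pebble process, I would inductively build pairs $(S_i, \calc_i)$, where $S_i$ is a subset of $R$'s depth-$N(i)$ vertices and $\calc_i \subseteq R'$ is a descendant subtree rooted at a vertex $u'_i$ of $b'$-depth $D'_{M(i)} := \sum_{k \leq M(i)}\log q'_k$, with $f(S_i) \subseteq \calc_i$ and $|S_i| \geq \XX(t_i)$. A blue step replaces each $v \in S_i$ by its $p_{N(i)+1}$ children, and a red step uses the constancy of $(p_n')$---which guarantees exactly $p'$ sub-subtrees of $\calc_i$ one depth further---together with pigeonhole on $f(S_i)$, to choose a sub-subtree $\calc_{i+1}$ retaining $\geq \lceil |S_i|/p' \rceil$ source vertices as its preimage. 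These operations exactly mirror the formal pebble dynamics, so the inequality $|S_i| \geq \XX(t_i)$ persists by induction.

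The geometric core is a uniform diameter bound. For $v \in S_i$ the embedding inequality gives $|d(f(b), f(v)) - D_{N(i)}| \leq C$. Once $M(i)$ is large enough that $D'_{M(i)} \geq m_0$ (which holds for all but finitely many $i$), $u'_i$ lies strictly deeper than $f(b)$ in $R'$, so a Gromov-product computation at $b'$ gives $d(f(b), u'_i) = m_0 + D'_{M(i)} - 2(f(b)\,|\,u'_i)_{b'} \geq D'_{M(i)} - m_0$, and every geodesic from $f(b)$ into $\calc_i$ passes through $u'_i$. At times $t_i = D_{N(i)}$ right after a blue step, maximality of $M(i)$ forces $D_{N(i)} - D'_{M(i)} \leq \log q'_{\max}$, so the triangle inequality yields
\[
\mathrm{diam}(f(S_i)) \leq 2\bigl(D_{N(i)} + C - d(f(b), u'_i)\bigr) \leq 2\bigl(\log q'_{\max} + C + m_0\bigr) =: D_0,
\]
independently of $i$.

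To close the contradiction, I would pass to a packing estimate in $R'$. The isotropic hypothesis gives $R$ bounded valence and edge lengths bounded below by $\log q_{\min} > 0$, so any $R$-ball of radius $C$ contains at most some $K_1$ vertices; consequently $S_i$ has a $(C+1)$-separated subset $S'_i$ of size $\geq |S_i|/K_1$. Then $f(S'_i)$ consists of points pairwise separated by some positive constant inside a region of $R'$ of diameter $\leq D_0$, and the bounded geometry of $R'$ (valence $p'+1$, edge lengths bounded above and below) caps the cardinality of such a set by a constant $K_2$. Hence $\XX(t_i) \leq |S_i| \leq K_1 K_2$ for all sufficiently large $i$, contradicting unboundedness. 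The main difficulty is the diameter bound: the initial reds (while $D'_{M(i)} < m_0$) can place $u'_i$ as an ancestor of $f(b)$, a case in which the tree computation collapses and must be waited out; the constancy of $(p_n')$ is essential both for matching the pebble dynamics at each red step and for the final packing count in $R'$.
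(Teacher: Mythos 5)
Your overall architecture is the same as the paper's: track the pebble sequence $\XX$ as a lower bound on the number of source vertices mapping into a nested family of subtrees of $R'$, then derive a contradiction from bounded geometry. However, there is a genuine gap at the blue step. You assert that replacing each $v \in S_i$ by its $p_{N(i)+1}$ children ``exactly mirrors the pebble dynamics,'' but this requires $f(w) \in \calc_i$ for every child $w$ of every $v \in S_i$, and nothing in your argument guarantees it. Since $f$ is only a rough isometric embedding, $f(w)$ need not descend from $f(v)$: one can only show that $f(w)$ descends from an ancestor of $f(v)$ at distance at most some constant $A = A(C)$ above $f(v)$ (this is the ``coarse order-preservation'' one gets from the embedding inequality). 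Your own diameter estimate places $f(v)$ within $\log q'_{\max} + C + m_0$ of the root $u_i'$ of $\calc_i$, so whenever $d(f(v), u_i') < A$ the image of a child can climb past $u_i'$ and descend into a sibling subtree, and the count $|S_{i+1}| \geq p_{N(i)+1}|S_i|$ with $f(S_{i+1}) \subseteq \calc_i$ fails. The distance constraints do not rule this out: escaping costs a detour of length about $2\,d(f(v), u_i')$, which is absorbed by the additive error $C$.

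The paper's proof exists precisely to handle this: it first replaces $f$ by a height-preserving map at bounded distance and proves the coarse order-preservation explicitly, and then --- crucially --- it never roots the tracked subtree at the current depth. Instead it keeps the root $w_n$ at least $N$ branchings above the current height, where $(N-1)\epsilon' \geq A$, so that every relevant image point sits at least $A$ below $w_n$ and its descendants' images provably stay inside $\tau(w_n)$. The price is that the cross-section $\tau(w_n)\!\mid_{h_n}$ has up to $(p')^N$ points rather than one branch, which is a bounded factor absorbed at the end (and replaces your packing estimate, which is otherwise fine). This buffer is also where the constancy of $(p_n')$ is really used: the division at a red step is performed at the branching of $w_n$, which lies at a shallower level than $h_n$, and one needs the branching number there to equal the $p'_{b+1}$ appearing in the definition of $\XX$; with $(p_n')$ constant this is automatic. (Your stated reasons for needing constancy --- the red-step pigeonhole and the packing count --- in fact only require bounded valence.) A secondary, repairable issue is that without the height-preserving reduction your red-step pigeonhole is ill-posed: $f(v)$ may lie at depth less than $D'_{M(i)+1}$ and hence in none of the $p'$ child subtrees among which you distribute it.
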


\subsection{Structure of the paper}

Section 2 provides relevant details on the metric spaces $\Z(p,q)$ and $HT(p,q)$. Section 3 focusses on embedding isotropic trees; in it we will prove \Cref{thm4}, thereby proving that (A3) $\implies$ (A2). In Section 4, by studying the sequence $\XX(p,q,p',q')$, we will prove that (A2) $\implies$ (A1). In Section 5 we find rough isometric embeddings between regular rooted trees when either (C1) or (C2) hold; it covers the implications (A1) $\implies$ (A3) and (B1) $\implies$ (B2). Finally, Section 6 considers how embeddings of regular rooted trees $R(p,q)$, symbolic Cantor sets $\Z(p,q)$ and treebolic spaces $HT(p,q)$ all relate to each other; we will prove (A5) $\implies$ (A4) $\implies$ (A3) $\implies$ (A5), (B2) $\implies$ (B3) and (B2) $\implies$ (B4). 

\section{Preliminaries}

\subsection{Notation}

Given a metric space $X$, a subset $A \subset X$ and a constant $r \geq 0$, we use the notation $\mathcal{N}_r(A)$ to denote the \textit{$r$-neighbourhood of $A$}. That is, 
\[ \mathcal{N}_r(A) = \{ x \in X: d(x,A) < r \} \]
If we have another subset $B \subset X$, we use the notation $d_H(A,B)$ to denote the \textit{Hausdorff distance} between $A$ and $B$. This is defined to be the (possibly infinite) quantity
\[ d_H(A,B) = \max (\sup_{x \in A} d(x, B), \sup_{y \in B} d(y,A))\]

\subsection{Symbolic Cantor sets}

Let $\Z(p,q) = \{ 0, 1, ..., p-1 \}^{\N}$ be the space of infinite sequences on $p$ letters. We can put a metric $\rho$ on $\Z(p,q)$: given $(a_n), (b_n) \in \Z(p,q)$ set $\rho((a_n), (b_n)) = q^{-N}$ where $a_n = b_n$ for $n \leq N$ and $a_{N+1} \neq b_{N+1}$.

\subsection{Treebolic spaces}

Let $T(p,q)$ be the metric tree with all edges of length $\log(q)$ and with valency $p+1$ at every vertex. Let $\gamma: [0,\infty) \rightarrow T(p,q)$ be some geodesic ray based at a vertex $b$ of $T(p,q)$. The ray $\gamma$ induces a \textit{height function} $h: T(p,q) \rightarrow \R$ on $T(p,q)$ via
\[ h(x) = d(b,x)  - 2 \cdot \length(\gamma_x \cap \gamma)\]
where $\gamma_x$ is the geodesic segment from $b$ to $x$. Now consider the upper half plane model of the hyperbolic plane: $\hyp^2 = \{ (x,y) \in \R \times \R_{>0} \}$ with the metric $ds^2 = \frac{dx^2 + dy^2}{y^2}$. We can put a height function on this model of $\hyp^2$ via
\[ h(x,y) = \log(y) \]
Note that both these height functions coincide with the classical notion of a Busemann function (see Definition 8.17 \cite{BH}). By a \textit{horostrip} in $\hyp^2$ we mean a subset of the form $\{ (x,y) \in \hyp^2 : h(x,y) \in [a,b] \}$ for some real interval $[a,b]$. The space $HT(p,q)$ is formed by gluing horostrips onto every edge of $T(p,q)$ in a height-preserving manner. More precisely, let $HT(p,q) = T(p,q) \times \R$ as a set. Let $e = [v,w]$ be some edge of $T(p,q)$ such that $h(v) < h(w)$. We can put a metric on $e \times \R$ by identifying it with the horostrip
\[ \{ (x,y) \in \hyp^2 : h(x,y) \in [h(v), h(w)] \}\]
If we do this for every edge of $T(p,q)$ then we will have produced a metric on the whole of $HT(p,q)$ by taking the shortest path metric. The key to understanding the geometry of $HT(p,q)$ is the following fact. Let $\pi: HT(p,q) \rightarrow T(p,q)$ be the projection. Let $L: (-\infty, \infty) \rightarrow T(p,q)$ be a height-increasing bi-infinite geodesic in $T(p,q)$. Then $\pi^{-1}(L) \subset HT(p,q)$ is an isometrically embedded copy of $\hyp^2$. Hence we use the following terminology. A \textit{horocycle} in $HT(p,q)$ is a preimage $\pi^{-1}(x)$ where $x \in T(p,q)$. A \textit{branching horocycle} in $HT(p,q)$ is a preimage $\pi^{-1}(v)$ where $v$ is a vertex of $T(p,q)$. For more details on treebolic spaces see Section 2B of \cite{WOESS}.

\section{Embeddings of isotropic trees}

Let $R$ be some isotropic tree with basepoint $b$. We can put a height function $h: R \rightarrow \R_{\geq 0}$ on $R$ via $h(x) = d(b,x)$. We will also put an orientation on the edges of $R$: we orient the edges such that the initial vertex is closer to $b$ than the terminal vertex.

\begin{definition*}
Given some $x \in R$, we define its \textit{tree of descendants} $\tau(x)$ to be the subset of $R$ that is the union of $x$ and all points that can be reached from $x$ by paths that are in agreement with the orientation on $R$. Similarly, given a subset $B \subseteq R$, we can define
\[ \tau(B) = \bigcup_{x \in B} \tau(x) \]
\end{definition*}

Some notation.

\begin{itemize}
    \item $V(R)$ denotes the vertex set of $R$.
    \item Given a subset $B \subseteq R$ and some height $h \geq 0$ we use the notation $B \eval_h$ to denote the set of all points in $B$ of height $h$; this is always a finite set. 
    \item Suppose $x,y \in R$. Let $x \wedge y$ denote the point of $\{ z \in R: x \in \tau(z), y \in \tau(z) \}$ of maximal height.
\end{itemize}

We now give a further series of definitions.

\begin{definition*}
Let $A \geq 0$. We say that a map $f: R \rightarrow R'$ between two isotropic trees is \textit{$A$-coarsely height-preserving} if $\sup_{x \in R} \absval{h(x) - h(f(x))} \leq A$.
\end{definition*}

\begin{definition*}
Let $b,b'$ denote the basepoints of $R,R'$ respectively. A map $f: R \rightarrow R'$ is a \textit{waterfall map} if $f(b) = b'$ and $f \eval_{\gamma}$ is an isometry for all geodesic rays $\gamma$ based at $b$. 
\end{definition*}

\begin{definition*}
We say that a map $f: R \rightarrow R'$ between isotropic trees is \textit{order-preserving} if the following implication holds:
\[ y \in \tau(x) \implies f(y) \in \tau(f(x)) \]
That is, if $y$ descends from $x$ then $f(y)$ descends from $f(x)$.
\end{definition*}

Note that the following statements are equivalent: $f: R \rightarrow R$ is a waterfall map; $f$ is height-preserving and continuous; $f$ is height-preserving and order-preserving. 

\begin{definition*}
Let $A \geq 0$. A map $f: R \rightarrow R'$ is \textit{$A$-coarsely order-preserving} if $d(f(x),f(x) \wedge f(y)) \leq A$ for all $x,y \in R$ with $y \in \tau(x)$.
\end{definition*}

\begin{definition*}
Given a vertex $v \in V(R)$ we denote by $\mathcal{C}(v)$ the set of \textit{children} of $v$. That is, $\mathcal{C}(v) = \{ w_1, ..., w_p \}$ where $w_1, ..., w_p$ are the vertices connected by a single edge to $v$ such that $h(w_i) > h(v)$.
\end{definition*}

A waterfall map $f: R \rightarrow R'$ can be very far from any notion of injectivity. For example, it could map every geodesic ray based at $b$ onto one single geodesic ray based at $b'$. We will say that a waterfall map is \textit{distributive} if it makes as much effort as possible to be injective; if it divides itself evenly at the vertices of $R'$.

\begin{definition*}
Suppose $f: R \rightarrow R'$ is a waterfall map and suppose we have a vertex $w \in V(R')$ such that $f^{-1}(w)$ is non-empty. Write $f^{-1}(w) = \{ x_1, ..., x_m \}$. If the $x_i$ are vertices, then set $p = \absval{\mathcal{C}(x_i)}$, the number of edges emanating outwards from a single $x_i$. If the $x_i$ are not vertices, then set $p = 1$. Similarly, let $p' = \absval{\mathcal{C}(w)}$, the number of edges emanating outwards from the vertex $w$. Consider the subsets
\begin{align*}
B_\epsilon(w) &= \tau(w) \cap \{ x \in R : d(w,x) \leq \epsilon \} \\
B_\epsilon(x_i) &= \tau(x_i) \cap \{ x \in R' : d(x_i,x) \leq \epsilon \} \quad \quad (1 \leq i \leq m)
\end{align*}
and choose $\epsilon$ small enough that $B_\epsilon(w)$ is isometric to a star with $p'$ arms and $B_\epsilon(x_i)$ is isometric to a star with $p$ arms. Let $P = mp$. Since $f$ is a waterfall map, an arm emanating from some $x_i$ is mapped isometrically onto one of the $p'$ arms emanating from $w$. Thus, we get a map from a set of cardinality $P$ (the set of arms emanating from the $x_i$) to a set of cardinality $p'$ (the set of arms emanating from $w$). We say that $f$ is \textit{distributive} if at most $\lceil \frac{P}{p'} \rceil$ arms emanating from the $x_i$ can be mapped to the same arm emanating from $w$.
\end{definition*}

One could say that $f: R \rightarrow R'$ is distributive if it takes $b \mapsto b'$ and then lets $R$ \textit{expand like a gas} within $R'$. The notion of a distributive waterfall map should generalise naturally to the case of maps between $\R$-trees. Further, applying the result of Kerr \cite{KERR} which states that quasi--trees are rough isometric to $\R$-trees, there should be a well-defined notion of a distributive waterfall map between quasi-trees (i.e. one that is conjugate to a distributive waterfall map between two associated $\R$-trees). 

\begin{definition*}
Suppose we two isotropic trees $R = R((p_n), (q_n))$ and $R' = R((p_n'), (q_n'))$. Set $q_0 = q_0' = 1$. We have the sets of vertex heights
\[ \mathcal{H}(R) = \{ \sum_{i=0}^a \log(q_i) : a \in \N_{0} \} \quad \quad \mathcal{H}(R') = \{ \sum_{i=0}^b \log(q_i') : b \in \N_{0} \}\]
Now let $\mathcal{H} = \mathcal{H}(R) \cup \mathcal{H}(R')$ and write $\mathcal{H} = \{ h_0, h_1, h_2, ...\}$ such that $h_n < h_{n+1}$ for $n \in \N_0$. We can now define the infinite integral sequence $\XX = \XX((p_n),(q_n),(p_n'),(q_n')): \N_{\geq 0} \rightarrow \N$. Set $\XX_0 = 1$. If $\XX_n$ has already been defined then we set
\[
    \XX_{n+1} = 
    \begin{cases}
    p_{a+1}\XX_n, \quad &\textrm{if } h_{n} = \sum_{i=0}^a \log(q_i) \in \mathcal{H}(R) \setminus \mathcal{H}(R') \\
    \lceil \frac{\XX_n}{p_{b+1}'} \rceil, \quad &\textrm{if } h_{n} = \sum_{i=0}^b \log(q_i') \in \mathcal{H}(R') \setminus \mathcal{H}(R) \\
    \lceil \frac{p_{a+1} \XX_n}{p_{b+1}'} \rceil, \quad &\textrm{if } h_{n} = \sum_{i=0}^a \log(q_i) = \sum_{i=0}^b \log(q_i') \in \mathcal{H}(R) \cap \mathcal{H}(R')
    \end{cases}
\]
\end{definition*}

Note that in the case when the sequences have constant values $p_n = p, q_n = q, p_n' = p', q_n' = q'$ then we recover the sequence $\XX(p,q,p',q')$ described in the introduction. 

If we have a distributive waterfall map $f: R \rightarrow R'$ between isotropic trees, the sequence $\XX_n$ captures the maximum cardinality of preimages of points $x \in R'$. More precisely, $\XX_n$ takes the values of the piecewise constant function
\[ h \mapsto \max_{x \in R'\eval_{h}} \absval{f^{-1}(x)} \]
as $h$ increases.

\begin{remark*}
Another definition of the sequence $\XX_n$ can be found by observing that the progression of the sequence $\XX_n$ depends only on the order of the multiples of $\log(q)$ and $\log(q')$ within the sequence $(h_n)$ rather than on the values themselves. Hence, we could replace $\mathcal{H}$ with the set
\[ \mathcal{S} = \mathcal{S}(R) \cup \mathcal{S}(R') = \{ \prod_{i=0}^a q_i : a \in \N_{0} \} \cup \{ \prod_{i=0}^b q_i' : b \in \N_{0} \} = \{ s_0, s_1, s_2, ... \}\]
where $h_n = \log(s_n)$ and $\XX_n$ would not change.
\end{remark*}

A first step towards understanding these sequences would be to know the following.

\begin{question*}
Does the boundedness of the sequence $\XX((p_n),(q_n),(p_n'),(q_n'))$ depend upon its initial value? In other words, if we changed $\XX_0 = 1$ to $\XX_0 = x_0$ for some $x_0 \in \N$, could the sequence move from being bounded to unbounded or vice-versa?
\end{question*}

\begin{manualtheorem}{5}
Suppose we have a pair of isotropic trees $R((p_n), (q_n))$, $R((p_n'), (q_n'))$ such that $(p_n')$ is constant. If $\XX((p_n), (q_n), (p_n'), (q_n'))$ is unbounded then there does not exist a rough isometric embedding $R((p_n),(q_n)) \rightarrow R((p_n'),(q_n'))$.
\end{manualtheorem}

It is clear that if $\XX$ is unbounded then a waterfall map $R \rightarrow R'$ cannot be a rough isometric embedding since points $x \in R'$ would have preimages of arbitrarily large cardinality and of the same height as $x$. If we could show that an arbitrary rough isometric embedding $f: R \rightarrow R'$ is at bounded distance from a waterfall map then \Cref{thm4} would follow immediately and we would no longer need the requirement that $(p_n')$ is constant. 

\begin{question*}
Is every rough isometric embedding $R \rightarrow R'$ between isotropic trees at bounded distance from a waterfall map? 
\end{question*}

Recall that a waterfall map is a height-preserving and order-preserving map $R \rightarrow R'$. We will see that a rough isometric embedding $R \rightarrow R'$ is at bounded distance from a height-preserving map (\Cref{chpcop}) and at bounded distance from an order-preserving map (\Cref{prop1} and \Cref{prop2}). The curse is that this does not imply it is at bounded distance from a height-preserving \textit{and} order-preserving map $R \rightarrow R'$. However, the idea of the proof of \Cref{thm4} is to show that a rough isometric embedding $R \rightarrow R'$ is at bounded distance from a height-preserving and \textit{coarsely} order-preserving map $f: R \rightarrow R'$: $f$ can't send a pair of siblings too far away from each other - the siblings will become $k$'th cousins where $k$ depends only on the rough isometry constant of $f$. This will do.

\begin{lemma} \label{chpcop}
A rough isometric embedding $f: R \rightarrow R'$ between isotropic trees is coarsely height-preserving, coarsely order-preserving and at bounded distance from a height-preserving rough isometric embedding.
\end{lemma}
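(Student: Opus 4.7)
The plan is to let $C$ denote the rough isometry constant of $f$ and to set $N := h'(f(b))$, a finite quantity associated to $f$. The key observation is that $N$ controls all the quantitative bounds in the lemma, so every claim reduces to short tree arithmetic once this quantity is isolated.

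First I would prove coarse height-preservation. Applying the rough isometric embedding property to the pair $(x,b)$ gives $|d(f(x), f(b)) - h(x)| \le C$, and the triangle inequality in $R'$ via the basepoint $b'$ gives $|d(f(x), f(b)) - h'(f(x))| \le h'(f(b)) = N$. Combining these yields $|h(x) - h'(f(x))| \le C + N$, so $f$ is $(C+N)$-coarsely height-preserving. Setting $A := C+N$, I would then construct the height-preserving companion $g\colon R \to R'$ pointwise: for each $x \in R$, let $g(x)$ be the unique ancestor of $f(x)$ at height $h(x)$ if $h'(f(x)) \ge h(x)$, and otherwise pick any descendant of $f(x)$ in $R'$ at height $h(x)$ (such descendants exist because $R'$ is infinite with every vertex of valency $\ge 2$). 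By construction $d(f(x), g(x)) = |h'(f(x)) - h(x)| \le A$, and a short triangle-inequality argument shows that $g$ is a $(1, C + 2A)$-rough isometric embedding at distance at most $A$ from $f$.

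Next, coarse order-preservation. For $y \in \tau(x)$ one has $d(x,y) = h(y) - h(x)$. Using the tree distance formula $d(u,v) = h'(u) + h'(v) - 2 h'(u \wedge v)$ in $R'$ yields the identity
\[ d\bigl(f(x),\, f(x) \wedge f(y)\bigr) \;=\; \tfrac{1}{2}\bigl( h'(f(x)) - h'(f(y)) + d(f(x), f(y)) \bigr). \]
By coarse height-preservation, $h'(f(x)) - h'(f(y))$ differs from $h(x) - h(y)$ by at most $2A$, and by the rough isometry $d(f(x), f(y))$ differs from $d(x,y) = h(y) - h(x)$ by at most $C$. Since $(h(x) - h(y)) + d(x,y) = 0$, the right-hand side is bounded in absolute value by $A + C/2$, which gives coarse order-preservation.

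The main thing that might have seemed an obstacle — bounding $h'(f(b))$ by a constant depending only on $C$ and the ambient tree parameters — turns out to be unnecessary. The lemma only claims that \emph{some} constant exists for each rough isometric embedding $f$, and $N = h'(f(b))$ is automatically finite for any given $f$. So the proof reduces to manipulating the distance formula $d(u,v) = h'(u) + h'(v) - 2 h'(u \wedge v)$ together with the rough isometry inequality, and packaging the output by projecting each $f(x)$ to the correct height along the ray through $b'$.
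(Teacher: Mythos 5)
Your proposal is correct and follows essentially the same route as the paper: the same basepoint-triangle-inequality bound $|h(x)-h(f(x))| \le C + d(b',f(b))$, the same three-case projection to build the height-preserving companion $g$, and the same use of the identity $d(u, u\wedge v) = \tfrac{1}{2}(h(u)-h(v)+d(u,v))$ to deduce coarse order-preservation. The only cosmetic difference is that you package the computation as a single signed identity rather than a chain of inequalities; the resulting constants agree.
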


\begin{proof}
Suppose $f$ is an $A$-rough isometric embedding. Let $x \in R$. We have that
\[ \absval{h(f(x)) - h(x)} = \absval{d(b', f(x)) - d(b, x)} \leq d(b', f(b)) + \absval{d(f(b), f(x)) - d(b, x)} \leq B \]
where $B = d(b',f(b)) + A$. So $f$ is $B$-coarsely height-preserving. 

Now let $x,y \in R$ be such that $y \in \tau(x)$. We have that
\[ d(f(x),f(y)) \leq d(x,y) + A = h(y) - h(x) + A \leq h(f(y)) - h(f(x)) + 2B + A\]
Also,
\[ d(f(x),f(y)) = h(f(x)) + h(f(y)) - 2h(f(x) \wedge f(y)) \]
and so 
\[ h(f(x)) - h(f(x) \wedge f(y)) \leq \frac{2B + A}{2} \]
Hence $f$ is coarsely order-preserving.

We will now define a height-preserving map $g: R \rightarrow R'$ at bounded distance from $f$. We have three cases.
\begin{enumerate}
    \item If $h(f(x)) = h(x)$ then set $g(x) = f(x)$.
    \item If $h(f(x)) > h(x)$ then let $g(x)$ be the unique point of height $h(x)$ such that $f(x) \in \tau(g(x))$. Then $d(f(x), g(x)) = h(f(x)) - h(x) \leq B$.
    \item If $h(f(x)) < h(x)$ then let $g(x)$ be any element of $\tau(f(x))$ (there could be many) of height $h(x)$. Then $d(f(x), g(x)) = h(x) - h(f(x)) \leq B$.
\end{enumerate}
Thus we see that $g$ is height-preserving and $d(f, g) \leq B$. Since $g$ is at bounded distance from a rough isometric embedding, $g$ is itself a rough isometric embedding.
\end{proof}

\begin{proof}[Proof of Theorem 5]

Write $R = R((p_n), (q_n))$ and $R' = R((p_n'), (q_n'))$ and denote by $b,b'$ the basepoints of $R,R'$ respectively. We know that $(p_n')$ has constant value $p'$ for some $p' \in \N_{\geq 2}$. Let $P \in \N_{\geq 2}$ be an upper bound for $(p_n)$ and let $\epsilon, M > 0$ be such that $\log(q_n) \in [\epsilon, M]$. Similarly, let $\epsilon', M' > 0$ be such that $\log(q_n') \in [\epsilon', M']$. Suppose there exists an $A$-rough isometric embedding $f: R \rightarrow R'$ for some constant $A \geq 0$. By the lemma above, we can assume that $f$ is height-preserving. Increasing $A$ if necessary, we can also assume that $f$ is also $A$-coarsely order-preserving.\\

\bolden{Step 1: The plan.} Let $N \in \N$ be chosen such that $(N-1)\epsilon' \geq A$. The plan is to inductively define an infinite sequence of vertices $w_n \in V(R')$ that induce an infinite sequence of nested subtrees of $R'$
\[ \tau_0 = \tau(w_0) \supseteq \tau_1 = \tau(w_1) \supseteq \tau_2 = \tau(w_2) \supseteq ... \]
such that 
\begin{enumerate}[label=(T\arabic*)] 
    \item $\tau_n \eval_{h_n}$ has cardinality at most $(p')^N$;
    \item If $\tau_n \eval_{h_n}$ has cardinality less than $(p')^N$ then $w_n = b'$;
    \item $\summ_n := \absval{f^{-1}(\tau_n \eval_{h_n})} \geq \XX_n$.
\end{enumerate}
We need to make one more crucial observation.

\begin{claim*}
Suppose we have some $w \in V(R')$ inducing a subtree $\tau(w)$ and a pair of heights $h < H$ such that $\tau(w) \eval_{h}$ has cardinality at least $(p')^{N}$. Then
\[ f^{-1}(\tau(w) \eval_{H}) = \tau(f^{-1}(\tau(w) \eval_{h})) \eval_{H} \]
\end{claim*}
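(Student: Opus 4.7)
The plan is to prove both inclusions of the set equality, with the key technical input being to convert the cardinality hypothesis on $\tau(w) \eval_h$ into a quantitative height bound $h - h(w) > A$. Since every vertex of $R'$ has exactly $p'$ children and each edge length $\log(q_n')$ is bounded below by $\epsilon'$, the cardinality $\absval{\tau(w) \eval_h}$ equals $(p')^k$, where $k$ is the number of vertex levels of $R'$ strictly above $h(w)$ and at most $h$. The hypothesis $\absval{\tau(w) \eval_h} \geq (p')^N$ therefore yields $k \geq N$, and since consecutive vertex levels are at least $\epsilon'$ apart, $h - h(w) \geq N\epsilon' \geq A + \epsilon'$, using the choice $(N-1)\epsilon' \geq A$. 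In particular $h(w) < h - A$, and this is the only way the hypothesis on $\tau(w) \eval_h$ enters the argument.

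For the inclusion $\supseteq$, I would take $x \in \tau(f^{-1}(\tau(w) \eval_h)) \eval_H$; this provides $h(x) = H$ together with some $y \in R$ satisfying $h(y) = h$, $x \in \tau(y)$, and $f(y) \in \tau(w)$. Applying $A$-coarse order-preservation to $x \in \tau(y)$ gives $d(f(y), f(y) \wedge f(x)) \leq A$, so the meet satisfies $h(f(y) \wedge f(x)) \geq h - A > h(w)$. Both $w$ and $f(y) \wedge f(x)$ lie on the geodesic from $b'$ to $f(y)$ (the former because $f(y) \in \tau(w)$, the latter by definition of the meet). Since this geodesic is parametrised by height, the strict inequality $h(w) < h(f(y) \wedge f(x))$ forces $w$ to be an ancestor of $f(y) \wedge f(x)$. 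Combined with $f(x) \in \tau(f(y) \wedge f(x))$ and height-preservation of $f$, this gives $f(x) \in \tau(w) \eval_H$, i.e., $x$ lies in the left-hand side.

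The converse inclusion $\subseteq$ runs by the same mechanism. Given $x \in f^{-1}(\tau(w) \eval_H)$, height-preservation yields $h(x) = H$; I let $y$ be the unique ancestor of $x$ at height $h$, which automatically gives $x \in \tau(y)$, so the only thing to check is $f(y) \in \tau(w)$. The identical coarse order-preservation estimate again supplies $h(f(y) \wedge f(x)) \geq h - A > h(w)$, but this time it is $f(x) \in \tau(w)$ that places both $w$ and $f(y) \wedge f(x)$ on a common geodesic from $b'$ to $f(x)$. Exactly as before, $w$ is an ancestor of $f(y) \wedge f(x)$, hence of $f(y)$, so $f(y) \in \tau(w) \eval_h$ and $x \in \tau(y) \subseteq \tau(f^{-1}(\tau(w) \eval_h))$.

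The only step I expect to require any real care is the conversion of the cardinality bound into the height bound; once $h(w) < h - A$ is in hand, the tree-geometric conclusions in both directions are essentially automatic. The specific numerical choice $(N-1)\epsilon' \geq A$, rather than the weaker $N\epsilon' \geq A$, is precisely what guarantees the one small extra $\epsilon'$ of slack ensuring strict inequality, which is the cleanest way to use that two points on a common geodesic from $b'$ are linearly ordered by height.
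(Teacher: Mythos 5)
Your proof is correct, and it rests on the same key estimate as the paper's: combine $A$-coarse order-preservation with the fact that the cardinality hypothesis forces $h - h(w)$ to exceed $A$, so that the meet $f(y)\wedge f(x)$ cannot escape $\tau(w)$. Your first inclusion is essentially the paper's argument verbatim. Your second inclusion, however, is done differently: the paper argues by contraposition, placing the height-$h$ ancestor's image in a sibling subtree $\tau(w')$ with $h(w')=h(w)$ and invoking the first inclusion for $w'$ together with disjointness of $\tau(w)$ and $\tau(w')$; you instead run the meet estimate directly on the root-geodesic through $f(x)$. Your version is arguably tidier, since the paper's appeal to ``our work above'' for $w'$ implicitly needs $\tau(w')\eval_{h}$ to also have cardinality at least $(p')^N$ (true by isotropy, but unstated). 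One small slip in the step you flagged as delicate: $\absval{\tau(w)\eval_{h}} = (p')^{k}$ where $k$ counts vertex levels in $[h(w),h)$, not in $(h(w),h]$ --- the branching at $w$ itself contributes a factor of $p'$ before the first level strictly above $h(w)$. With the corrected count the hypothesis yields only $h - h(w) > (N-1)\epsilon' \geq A$ rather than $h-h(w)\geq N\epsilon'$, but this is still a strict bound by $A$ (and even a non-strict one would do, since $h(f(y)\wedge f(x)) = h(w)$ would give $f(y)\wedge f(x) = w \in \tau(w)$), so nothing in your argument breaks.
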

What is this saying? It says that (assuming the cardinality of $\tau(w) \eval_{h}$ is large) the points of $R$ which map into $\tau(w) \eval_{H}$ are precisely the descendants of those points of $R$ which map into $\tau(w) \eval_{h}$. 

\begin{proof}[Proof of claim]
Let $f(x) \in \tau(w) \eval_{h}$ and $y \in \tau(x) \eval_{H}$. Since $f$ is $A$-coarsely order-preserving, we have that $d(f(x) \wedge f(y), f(x)) \leq A \leq (N-1) \epsilon'$. Also, using the fact that $\tau(w) \eval_h$ has cardinality at least $(p')^N$, we know that $d(f(x), w) \geq (N-1)\epsilon'$. Hence, $f(x) \wedge f(y) \in \tau(w)$ and in particular $f(y) \in \tau(w)$. So
\[  \tau(f^{-1}(\tau(w) \eval_{h})) \eval_{H} \subseteq f^{-1}(\tau(w) \eval_{H}) \]
Now suppose $y \in R \eval_H$ descends from some $x \in R \eval_{h}$ (i.e. $y \in \tau(x)$) and suppose $x$ does not map into $\tau(w) \eval_h$. Clearly then $w \neq b'$ and $R' \eval_{h(w)}$ is a set of cardinality $k$ for some $k \geq 2$. There exists $w' \in R' \eval_{h(w)}$ such that $w' \neq w$ and $f(x) \in \tau(w') \eval_{h}$. By our work above, $f(y) \in \tau(w') \eval_{H}$ and so $f(y) \not \in \tau(w) \eval_{H}$ since $\tau(w)$ and $\tau(w')$ are disjoint. And so
\[ f^{-1}(\tau(w) \eval_{H}) \subseteq \tau(f^{-1}(\tau(w) \eval_{h})) \eval_{H} \qedhere \] 
\end{proof}

Property (T2) combined with the above claim implies that we have the equality 
\begin{equation} \label{eq}
f^{-1}(\tau_n \eval_{h_{n+1}}) = \tau(f^{-1}(\tau_n \eval_{h_n})) \eval_{h_{n+1}}
\end{equation}
for each of the (yet to be defined) subtrees $\tau_n$. \\

\bolden{Step 2: The base case.} Set $w_0 = b'$. Then $\tau_0 = \tau(b') = R'$. We need to verify that (T1), (T2) and (T3) hold. Since $h_0 = 0$, $\tau_0 \eval_{h_0}$ has cardinality $1$ which is evidently no greater than $(p')^N$. Certainly $\tau_0 \eval_{h_0}$ has cardinality less than $(p')^N$ but (T2) holds since $w_0 = b'$. Finally note that 
\[ \summ_0 = 1 = \XX_0\]

\bolden{Step 3: The induction.} Suppose $\tau_n$ has been defined. We will now define $\tau_{n+1}$. We have three cases to consider.

\begin{enumerate}
    \item Suppose $h_n \in \mathcal{H}(R) \setminus \mathcal{H}(R')$. That is, $h_n$ is a vertex height for $R$ but not $R'$. In this case we set $\tau_{n+1} = \tau_n$. Then $\tau_{n+1} \eval_{h_{n+1}} = \tau_n \eval_{h_{n+1}}$ will have the same cardinality as $\tau_n \eval_{h_n}$ since $R'$ has no vertices with height in the interval $[h_n, h_{n+1})$. So $\tau_{n+1}$ satisfies properties (T1) and (T2). Further, by \eqref{eq} we have that
    \[ \summ_{n+1} = p_{a+1} \summ_n \]
    where $h_n = \sum_{i=0}^a \log(q_i)$. By induction, we can assume that $\summ_n \geq \XX_n$ and so
    \[ \summ_{n+1} = p_{a+1} \summ_n \geq p_{a+1} \XX_n = \XX_{n+1} \]
    \item We divide into two subcases.
    \begin{enumerate}
        \item Suppose $h_n \in \mathcal{H}(R') \setminus \mathcal{H}(R)$ and $\tau_n \eval_{h_n}$ has cardinality less than $(p')^N$ (and so in fact it has cardinality at most $(p')^{N-1}$). By induction, $w_n = b'$. Note also that the cardinality of $\tau_n \eval_{h_{n+1}}$ is $p'$ times larger than the cardinality of $\tau_n \eval_{h_n}$. Set $\tau_{n+1} = \tau_n$. Then it is clear that (T1) and (T2) hold. Also,
        \[ \summ_{n+1} = \summ_n \geq \XX_n \geq \lceil \frac{\XX_n}{p'} \rceil = \XX_{n+1} \]
        and so (T3) holds.
        \item Suppose $h_n \in \mathcal{H}(R') \setminus \mathcal{H}(R)$ and $\tau_n \eval_{h_n}$ has cardinality $(p')^N$. In this case, $\tau_n \eval_{h_{n+1}}$ has cardinality $(p')^{N+1}$. Let $\{ v_1, ..., v_{p'}\} = \mathcal{C}(w_n)$. $\tau_{n} \eval_{h_{n+1}}$ can be partitioned into $p'$ sets of cardinality $(p')^N$ each:
        \[ \tau_{n} \eval_{h_{n+1}} = \bigsqcup_{i=1}^{p'} \tau(v_i) \eval_{h_{n+1}} \]
        And so, using \eqref{eq}, we get a partition
        \[ \tau(f^{-1}(\tau_n \eval_{h_n})) \eval_{h_{n+1}} = f^{-1}(\tau_n \eval_{h_{n+1}}) = \bigsqcup_{i=1}^{p'} f^{-1}(\tau(v_i) \eval_{h_{n+1}}) \]
        The cardinality of the above set is precisely $\summ_n$ and so there must exist some $1 \leq j \leq p'$ such that 
        \[ \absval{f^{-1}(\tau(v_j) \eval_{h_{n+1}})} \geq \lceil \frac{\summ_n}{p'} \rceil \]
        Set $w_{n+1} = v_j$. It is clear that (T1) and (T2) hold and 
        \[ \summ_{n+1} = \absval{f^{-1}(\tau(v_j) \eval_{h_{n+1}})} \geq \lceil \frac{\summ_n}{p'} \rceil \geq \lceil \frac{\XX_n}{p'} \rceil = \XX_{n+1} \]
        and so (T3) holds. 
    \end{enumerate}
    \item Again, we divide into two subcases.
    \begin{enumerate}
        \item Suppose $h_n \in \mathcal{H}(R) \cap \mathcal{H}(R')$ and $\tau_n \eval_{h_n}$ has cardinality less than $(p')^N$. As before we set $\tau_{n+1} = \tau_n$ and (T1) and (T2) hold. We have that
        \[ \summ_{n+1} = p_{a+1} \summ_n \geq p_{a+1} \XX_n \geq \lceil \frac{p_{a+1} \XX_n}{p'} \rceil = \XX_{n+1} \]
        where $h_n = \sum_{i=0}^a \log(q_i)$. So (T3) holds.
        \item Finally suppose that $h_n \in \mathcal{H}(R) \cap \mathcal{H}(R')$ and $\tau_n \eval_{h_n}$ has cardinality $(p')^N$. Again we write $\{ v_1, ..., v_{p'}\} = \mathcal{C}(w_n)$ and we have
        \[ \tau(f^{-1}(\tau_n \eval_{h_n})) \eval_{h_{n+1}} = f^{-1}(\tau_n \eval_{h_{n+1}}) = \bigsqcup_{i=1}^{p'} f^{-1}(\tau(v_i) \eval_{h_{n+1}}) \]
        The cardinality of the above set is $p_{a+1} \summ_n$ where $h_n = \sum_{i=0}^a \log(q_i)$. Hence, there exists some $1 \leq j \leq p'$ such that 
        \[ \absval{f^{-1}(\tau(v_j) \eval_{h_{n+1}})} \geq \lceil \frac{p_{a+1} \summ_n}{p'} \rceil \]
        and we set $w_{n+1} = v_j$. (T1) and (T2) hold as before and
        \[ \summ_{n+1} = \absval{f^{-1}(\tau(v_j) \eval_{h_{n+1}})} \geq \lceil \frac{p_{a+1} \summ_n}{p'} \rceil \geq \lceil \frac{p_{a+1} \XX_n}{p'} \rceil = \XX_{n+1} \]
        and so (T3) holds as well. 
    \end{enumerate}
\end{enumerate}

\bolden{Step 4: The conclusion.} If $\XX_n$ is unbounded then $\summ_n$ must be unbounded too. But $\summ_n$ is the preimage under $f$ of a set of cardinality at most $(p')^N$ and so we can conclude preimages of single points under $f$ can have arbitrarily large cardinality. Since $f$ is height-preserving, preimages of single points must all have the same height in $R$, i.e. they are contained in some set $R \eval_{h}$. Further, since $f$ is an $A$-rough isometry, they must all be contained in some ball of radius $A$. However, there is a fixed upper bound on the cardinality of balls of radius $A$ in a level set $R \eval_{h}$. Hence if $\XX_n$ is unbounded there cannot exist a rough isometric embedding of $R$ into $R'$. 
\end{proof}

Naturally, one hopes that the requirement that $(p_n')$ is constant is unnecessary. 

\begin{conjecture*}
Suppose we have two isotropic trees $R = R((p_n), (q_n))$ and $R' = R((p_n'), (q_n'))$. If $\XX((p_n), (q_n), (p_n'), (q_n'))$ is unbounded then there does not exist a rough isometric embedding of $R$ into $R'$. 
\end{conjecture*}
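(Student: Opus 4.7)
The plan is to follow the inductive construction in the proof of \Cref{thm4}, replacing the fixed branching $p'$ throughout by the uniform bound $P' := \sup_n p_n'$. Set $\epsilon' := \inf_n \log q_n' > 0$, fix $N$ with $N\epsilon' \geq A$ and define the threshold $T := (P')^N$. Because $p_i' \leq P'$, any subtree $\tau(w) \subseteq R'$ with $\absval{\tau(w)\eval_h} \geq T$ must have at least $N$ $R'$-levels between $w$ and height $h$, so $h - h(w) \geq N\epsilon' \geq A$; this is precisely the hypothesis under which the crucial identity \eqref{eq} in the proof of \Cref{thm4} remains valid, and the claim there goes through verbatim for any such $w$. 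As in \Cref{thm4}, we may invoke \Cref{chpcop} to assume that a hypothetical rough isometric embedding $f\colon R \to R'$ is height-preserving and $A$-coarsely order-preserving.

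The heart of the argument is to construct vertices $w_n \in V(R')$ with subtrees $\tau_n := \tau(w_n)$ satisfying: (T1) $\absval{\tau_n\eval_{h_n}} \leq P'T$; (T2) if $w_n \neq b'$ then $\absval{\tau_n\eval_{h_n}} \geq T$; and (T3) $\summ_n := \absval{f^{-1}(\tau_n\eval_{h_n})} \geq \XX_n$. Start with $w_0 = b'$. At a step with $h_n \in \mathcal{H}(R) \setminus \mathcal{H}(R')$, leave $w_{n+1} = w_n$. At a step with $h_n \in \mathcal{H}(R')$, compute the provisional cardinality $c := p_{b+1}' \absval{\tau_n\eval_{h_n}}$: if $c \leq P'T$, keep $w_{n+1} = w_n$; otherwise zoom in by passing to a descendant $w_{n+1}$ of $w_n$ of the smallest depth for which $\absval{\tau(w_{n+1})\eval_{h_{n+1}}}$ lies in $[T, P'T]$.

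The main obstacle is preserving (T3) through zoom-ins. In the constant case, the pigeonhole step divides $\summ$ by exactly $p'$, matching the divisor in the inductive definition of $\XX$. When $(p_n')$ varies, the branching at $w_n$'s $R'$-level need not equal $p_{b+1}'$, and in order to restore cardinality into $[T, P'T]$ one may need to descend several $R'$-levels at once, dividing $\summ$ by the product of their branchings. I would handle this by an amortised analysis of the potential $\Phi_n := \summ_n/\XX_n$. During every non-zoom step with $h_n \in \mathcal{H}(R')$, $\summ$ is unchanged while $\XX$ divides by $p_{b+1}'$, so $\Phi$ grows by a factor of $p_{b+1}'$; between two consecutive zoom-ins, the cardinality must grow from some value in $[T, P'T]$ to a value exceeding $P'T$, so these multiplicative factors accumulate to more than $P'$. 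On the other hand, the smallest admissible product of branchings used in a single zoom-in is bounded by $P'$ times $p_{b+1}'$ (since each $p_i' \leq P'$), so the one-step drop in $\Phi$ at a zoom-in is by a factor of at least $1/P'$. Comparing, the cumulative growth of $\Phi$ over one "cycle" (intermediate non-zoom steps followed by a zoom) is strictly greater than $1$, and the invariant $\Phi_n \geq 1$ is preserved.

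Once (T1)--(T3) are established, the conclusion is identical to that of \Cref{thm4}: if $\XX$ is unbounded then $\summ_n \to \infty$, but (T1) together with the fibre bound $\absval{f^{-1}(y)} \leq C_0$ (each fibre sits inside an $A$-ball in a single level set of $R$, whose cardinality is controlled by $P$, $\epsilon$ and $A$) yields $\summ_n \leq C_0 \cdot P'T$, a contradiction. The potential-function analysis of the previous paragraph is the main technical hurdle: the variable branching destroys the clean alignment between the pigeonhole division and the definition of $\XX$, and the delicate case is when the branching at $w_n$'s $R'$-level is much smaller than $p_{b+1}'$, forcing a deeper zoom-in and a correspondingly larger one-step loss that must be offset by the surplus accumulated during the preceding non-zoom steps.
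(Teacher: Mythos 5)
First, note that this statement is posed in the paper as an open conjecture: the paper proves the result only under the extra hypothesis that $(p_n')$ is constant (\Cref{thm4}), and explicitly leaves the general case unresolved. So you are attempting to settle an open problem, and your proposal must be judged as a standalone argument. Your overall strategy is the natural one --- replace the exact window $(p')^N$ by a window $[T, P'T]$ with $T=(P')^N$, allow ``zoom-ins'' that descend several $R'$-levels at once, and amortise the resulting over-division of $\summ_n$ against the non-zoom steps --- and you correctly identify the crux: the divisions of $\XX$ are indexed by the $R'$-level being crossed at height $h_n$, while the pigeonhole divisions of $\summ$ are indexed by the levels just below $w_n$, and these no longer coincide when $(p_n')$ varies.

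However, the potential-function analysis, which you yourself flag as the main hurdle, does not work as stated, for two reasons. (i) The quantitative claims are wrong. Writing $c^{(i)}\in[T,P'T]$ for the cardinality $\absval{\tau(w)\eval_h}$ just after the $i$-th zoom, the growth factors accumulated before the next zoom multiply to $c^{\mathrm{pre}}p'_{\mathrm{trig}}/c^{(i)}$, which is only guaranteed to exceed $P'T/c^{(i)}\geq 1$, not $P'$ (take $c^{(i)}$ close to $P'T$ and the next branching equal to $2$: the zoom can retrigger immediately, with no accumulated surplus). Correspondingly, the net change of $\Phi$ over a full cycle is $c^{(i+1)}/c^{(i)}$, which can be as small as $1/P'$; your assertion that each cycle multiplies $\Phi$ by more than $1$ is false. (The invariant $\Phi_n\geq 1$ does survive in the \emph{real-valued} idealisation, but only by telescoping --- $\Phi_n$ equals $c_n\geq 1$ exactly --- not by per-cycle monotonicity.) (ii) Even granting the telescoped identity, it compares $\summ_n$ with the idealised product $\prod p_a/\prod p'_b$, not with $\XX_n$. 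The ceilings in $\XX_{n+1}=\lceil \XX_n/p'_{b+1}\rceil$ can inflate $\XX_n$ above the idealised product by an unbounded factor (e.g.\ over a long initial stretch in which $\XX_n$ is repeatedly floored at small values), whereas the available surplus $c_n$ is bounded by $P'T$; on the other side, $\summ_n$ is floored by its own, differently timed, ceilings. In the constant-$p'$ proof these roundings are handled by the exact step-by-step alignment $\lceil\summ_n/p'\rceil\geq\lceil\XX_n/p'\rceil$, which your decoupled scheme destroys and your amortisation does not restore. Until you exhibit a strengthened induction hypothesis that survives both the multi-level zooms and the integer roundings, (T3) is not established and the conjecture remains open.
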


One might suspect the converse holds: that if $\XX_n$ is bounded then there exists a rough isometric embedding of $R$ into $R'$. This seems plausible since if $\XX_n$ is bounded then a distributive waterfall map from $R$ into $R'$ would have bounded point preimages. 

\begin{question*}
If $\XX_n((p_n), (q_n), (p_n'),(q_n'))$ is bounded, does there exist a rough isometric embedding $R((p_n),(q_n)) \rightarrow R((p_n'),(q_n'))$?
\end{question*}

However, as the following example shows, we can find pairs of isotropic trees for which $\XX_n$ is bounded and for which \textit{none} of the distributive waterfall maps are rough isometric embeddings. 

\begin{example*}
Let $R = R((p_n),(q_n))$ where $p_1 = 6$, $q_1 = 2$ and $p_n = 3$, $q_n = 4$ for $n \geq 2$. Let $R' = R((p_n'),(q_n'))$ where $p_n' = 3$, $q_n' = 4$ (equivalently $R' = R(3,4)$). Let $b,b'$ denote the basepoints of $R,R'$ respectively. The associated sequence $\XX_n((p_n), (q_n), (p_n'), (q_n'))$ is bounded; indeed, $\XX_n \leq 6$. Suppose that $f$ is a distributive waterfall map of $R$ into $R'$. We will find two distinct geodesic rays, $\gamma_B$ and $\gamma_Y$, based at $b$ which have the same image under $f$. We first need to establish a combinatorial fact. \\

\textit{Suppose we have $6$ objects such that $3$ are coloured blue and $3$ are coloured yellow. If we want to partition the objects into $3$ groups of size $2$ then there must be a group which contains a blue and a yellow.} \\

$6$ arms emanate from $b$ which are divided between the $3$ arms emanating from $b'$. Thus, pairs of arms emanating from $b$ are crushed together by $f$. Consider a fixed pair of arms emanating from $b$ that are crushed together under $f$. We will say that descendants of the first arm are \textit{blue} and descendants of the second arm are \textit{yellow}. Consider the terminal vertex $w_1 \in R'$ of the edge onto which the blue and yellow arm are crushed. The preimage of this vertex consists of $6$ points, $3$ blue and $3$ yellow. By the statement above, we know that a blue arm and a yellow arm must be crushed onto the same edge emanating from $w_1$. Let $w_2$ be the terminal vertex of this edge. Again, we know that the preimage of $w_2$ consists of $3$ blue and $3$ yellow points and we know that a blue arm and a yellow arm must be crushed onto some edge emanating from $w_2$. Continuing in this way, we get a sequence of vertices $b, w_1, w_2, w_3, ...$ such that if $\eta$ is the geodesic which joins them all together then $f^{-1}(\eta)$ contains a blue geodesic ray $\gamma_B$ and a yellow geodesic ray $\gamma_Y$. Points on $\gamma_B$ and $\gamma_Y$ of the same height have the same image under $f$ even though they may be arbitrarily far away in $R$. So $f$ is not a rough isometric embedding.

Note, however, that there does nonetheless exist a rough isometric embedding $R \rightarrow R'$; if we crush $\{ x \in R : h(x) \leq \log(2) \}$ to a point then we get $6$ copies of $R(3,4)$ glued together; if we crush $\{ y \in R' : h(y) \leq 2\log(4)\}$ to a point then we get $9$ copies of $R(3,4)$ glued together.

\end{example*}

\section{The sequence $\XX(p,q,p',q')$}

The sequence $\XX(2,2,3,3)$ can be understood as follows. We start with the non-negative number line $\R_{\geq 0}$ (a long pebbly beach perhaps). We begin with a single pebble in our hand. We then imagine walking along the number line playing the following game as we go: each time we pass a power of $2$, we multiply the amount of pebbles in our possession by $2$; each time we pass a power of $3$, we divide our pebbles into $3$ groups as evenly as possible and then keep only one of the larger groups. The question is whether there exists some fixed upper bound on the number of pebbles in our possession as we play this game.

\begin{figure}[h]
\centering
\includegraphics[width=1\textwidth]{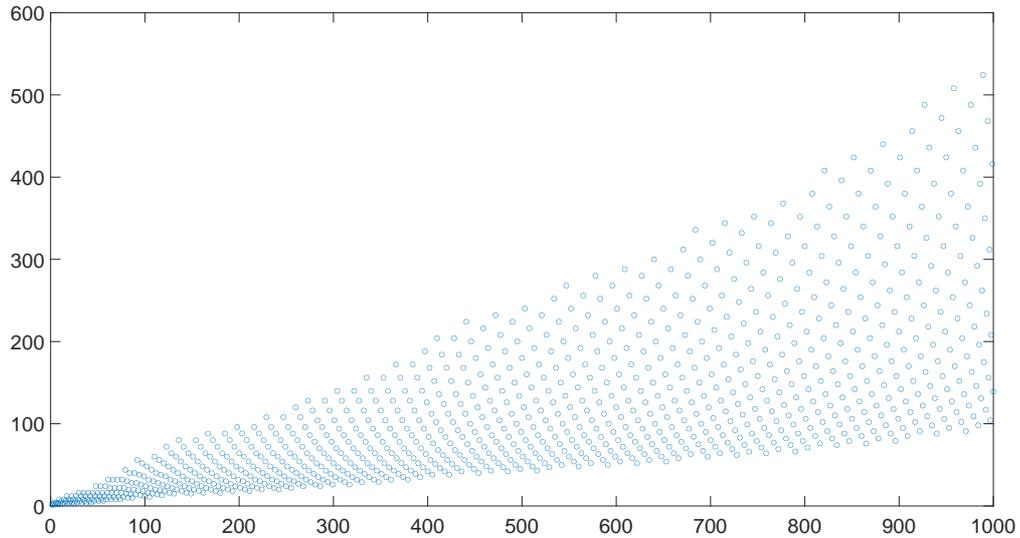} 
\caption{The sequence $\XX(2,2,3,3)$. Plotted up to $n = 1000$.}
\label{fig1}
\end{figure}

In order to prove that (A2) $\implies$ (A1), we need the following lemma.

\begin{lemma} \label{lemma}
Suppose that $\log(q')/\log(q)$ is irrational. Let $a \log(q)$ be a multiple of $\log(q)$ and let $b \log(q')$ be the least multiple of $\log(q')$ that is greater than $a \log(q)$. Then there exists $a' > a$ such that, if $b' \log(q')$ is the least multiple of $\log(q')$ greater than $a' \log(q)$, we have
\[ b'\log(q') - a' \log(q) < b \log(q') - a \log(q) \]
\end{lemma}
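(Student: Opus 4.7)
The plan is to convert the inequality into a statement about fractional parts of multiples of an irrational number, and then invoke the classical density of such fractional parts modulo one.

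Set $\alpha = \log(q)/\log(q')$, which is irrational by hypothesis. Then $a\log(q) = a\alpha \cdot \log(q')$. For any $a \geq 1$ the number $a\alpha$ is not an integer, so the least integer $b$ with $b\log(q') > a\log(q)$ is $b = \lfloor a\alpha \rfloor + 1$. A direct computation gives
\[
b\log(q') - a\log(q) = \bigl(1 - \{a\alpha\}\bigr)\log(q'),
\]
where $\{x\}$ denotes the fractional part of $x$. (If $a = 0$, the expression is $\log(q')$; the argument below still produces $a'$ since the value $(1-\{a'\alpha\})\log(q')$ is strictly less than $\log(q')$ whenever $\{a'\alpha\} > 0$, which holds for every $a' \geq 1$.)

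Thus the desired inequality $b'\log(q') - a'\log(q) < b\log(q') - a\log(q)$ is equivalent to the condition $\{a'\alpha\} > \{a\alpha\}$. So it suffices to produce an integer $a' > a$ with fractional part $\{a'\alpha\}$ lying in the interval $(\{a\alpha\}, 1)$, which is a non-empty open subinterval of $[0,1)$ (non-emptiness is immediate if $a \geq 1$, and trivial if $a = 0$).

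The key step is to invoke the classical theorem of Kronecker (or equivalently Weyl equidistribution): because $\alpha$ is irrational, the sequence $\{n\alpha\}_{n \in \N}$ is dense modulo $1$, and in particular, for any fixed $N$, the tail $\{\{n\alpha\} : n > N\}$ is already dense in $[0,1)$. Applying this with $N = a$ and to the non-empty open interval $(\{a\alpha\}, 1)$ yields the required $a' > a$. The only non-routine ingredient is the density statement, but this is standard and requires no real obstacle; the rest of the proof is a direct translation.
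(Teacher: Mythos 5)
Your proof is correct and follows essentially the same route as the paper: both arguments reduce the inequality to finding a multiple of $\log(q)$ lying just below a multiple of $\log(q')$ with a smaller gap, and both obtain this from the density of the orbit of an irrational rotation (Kronecker/Weyl). The only cosmetic difference is that you phrase the density statement in terms of fractional parts $\{a\alpha\}$ with $\alpha = \log(q)/\log(q')$ and select $a'$ first, whereas the paper works on the circle $\R/(\log(q)\Z)$ under rotation by $\log(q')$ and selects $b'$ first.
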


\begin{proof}
Write $d = b \log(q') - a \log(q)$. Consider the metric space $X = \R / (\log(q) \Z)$. Let $\theta: X \rightarrow X$ be rotation by $\log(q')$
\[ [x] \xmapsto{\theta} [x + \log(q')] \]
We can think of $X$ as a circle with circumference $\log(q)$ such that as $x$ increases, $[x]$ moves anticlockwise around the circle. We have a bilipschitz map
\[ X = \R / (\log(q) \Z) \xrightarrow{\eta} \R / \Z \]
given by
\[ [x] \mapsto \Big[\frac{x}{\log(q)}\Big] \]
$\theta$ is conjugate under $\eta$ to the map $\theta' = \eta \theta \eta^{-1}: \R / \Z \rightarrow \R / \Z$
\[ [x] \xmapsto{\theta'} \Big[x + \frac{\log(q')}{\log(q)}\Big] \]
which has dense orbits since $\frac{\log(q')}{\log(q)}$ is irrational. Consequently, $\theta$ has dense orbits. In particular, we can find arbitrarily large $b'$ such that the anticlockwise distance from $[0]$ to $\theta^{(b')}([0]) = [b' \log(q')]$ is less than $d$. If we choose $a'$ such that $a' \log(q)$ is the largest multiple of $\log(q)$ less than $b' \log(q')$, and if we choose $b'$ large enough that $a' > a$, then $a'$ and $b'$ satisfy the desired properties. 
\end{proof}

\begin{theorem} \label{thm}
If $\XX(p,p',q,q')$ is bounded then either (C1) or (C2) holds.
\end{theorem}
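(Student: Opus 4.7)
My plan is to prove the contrapositive: if neither (C1) nor (C2) holds, then $\XX(p,q,p',q')$ is unbounded. Set $\alpha = \log p/\log q$ and $\alpha' = \log p'/\log q'$; failure of (C1) forces $\alpha \geq \alpha'$. My main tool is the companion sequence
\[ Z_n := \XX_n \cdot (p')^{b_n}/p^{a_n}, \]
where $a_n$ and $b_n$ denote the number of blue and red markers processed by step $n$. A short computation confirms $Z_n$ is nondecreasing: blue multiplications preserve it, exact (``clean'') red divisions preserve it, and each ``messy'' red (one at which the relevant pre-division value is not a multiple of $p'$) strictly increases it, by an additive amount at least $(p')^{b_n-1}/p^{a_n}$. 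From $Z_0 = 1$ I immediately obtain the basic lower bound $\XX_n \geq p^{a_n}/(p')^{b_n}$.

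If $\alpha > \alpha'$, then since $a_n$ grows like position$/\log q$ and $b_n$ like position$/\log q'$, the exponent $a_n \log p - b_n \log p'$ grows linearly in position with positive slope $\alpha - \alpha'$. Hence the lower bound $p^{a_n}/(p')^{b_n}$ itself blows up, and unboundedness of $\XX_n$ follows immediately.

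Suppose instead $\alpha = \alpha'$ and (C2) fails. Then $p, p'$ are not powers of a common integer, and by unique factorisation the common value $\log p/\log p' = \log q/\log q'$ must be irrational (a rational value $r/s$ in lowest terms would yield $p^s = (p')^r$, forcing $p, p'$ into the common-base form ruled out by failure of (C2)). This irrationality confines $a_n \log q - b_n \log q'$ to a bounded interval, so $p^{a_n}/(p')^{b_n}$ stays in a fixed compact subinterval of $(0, \infty)$ and each messy red contributes at least a uniform positive amount to $Z_n$. I then split: if infinitely many messy reds occur, $Z_n \to \infty$ and hence $\XX_n \to \infty$. Otherwise, past some index $n_0$ every red is clean, and for each prime $\pi$ the linear recurrence
\[ v_\pi(\XX_n) = v_\pi(\XX_{n_0}) + (a_n - a_{n_0})v_\pi(p) - (b_n - b_{n_0}) v_\pi(p') \]
drifts asymptotically at rate proportional to $\delta_\pi := v_\pi(p)/\log q - v_\pi(p')/\log q'$. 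If $\delta_\pi = 0$ for every prime, then $v_\pi(p)/v_\pi(p')$ would equal the irrational $\log p/\log p'$ whenever both valuations are positive, which is impossible. So some $\delta_\pi \neq 0$: if $\delta_\pi > 0$ then $v_\pi(\XX_n) \to \infty$ forces $\XX_n \to \infty$; if $\delta_\pi < 0$ then $v_\pi(\XX_n) \to -\infty$ is impossible for a positive integer, ruling out the ``finitely many messy reds'' subcase and throwing us back to the first one.

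The main obstacle is this asymptotic number-theoretic argument in the $\alpha = \alpha'$ case: the ideal value $p^{a_n}/(p')^{b_n}$ is now bounded, so unboundedness of $\XX_n$ must be extracted from accumulated rounding, and one must convert the multiplicative independence of $p, p'$ into either a persistent source of messy reds or a forbidden drift in some prime valuation. The Lemma preceding the theorem, which produces arbitrarily small positive gaps $b\log q' - a\log q$ in the irrational regime, offers an alternative handle: each such close return is a candidate position where a forced departure from the ideal can be isolated, giving a more hands-on route to the same conclusion.
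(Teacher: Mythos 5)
Your proof is correct, and in the hard case it takes a genuinely different route from the paper. The case $\log p/\log q > \log p'/\log q'$ is handled identically in both (the real-valued ``ideal'' sequence $p^{a_n}/(p')^{b_n}$, the paper's $\mathcal{Y}_n$, is a divergent lower bound for $\XX_n$), and both arguments reduce the equality case to the irrationality of $\log q/\log q'$ in the same way. But where you then normalise by the ideal value to obtain the monotone quantity $Z_n$ and dichotomise on whether infinitely many divisions are inexact --- killing the ``eventually all divisions are exact'' branch via the drift of some $p$-adic valuation $v_\pi(\XX_n)$, using that $\delta_\pi=0$ for all $\pi$ would force an integer ratio of valuations to equal the irrational $\log q/\log q'$ --- the paper instead applies its \Cref{lemma} (density of the orbit of an irrational rotation) to produce, beyond any index $l$ with $h_l=a\log q$, an index $m$ with $h_m=a'\log q$ at which the accumulated ratio $p^{a'-a}(p')^{-(b'-b)}$ exceeds $1$, so that $\XX_m>\XX_l$ and hence $\XX_m\geq\XX_l+1$ by integrality; iterating gives unboundedness. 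Both arguments ultimately trade on integrality (your uniform increment of $Z_n$ at each messy division is the same phenomenon as the paper's $+1$ jumps), but yours is more structural, isolating exactly where growth comes from, at the cost of invoking prime factorisations; the paper's is shorter and needs nothing beyond irrationality. One presentational nit: when ruling out $\delta_\pi=0$ for all primes, you should note that $\delta_\pi=0$ together with $v_\pi(p)>0$ already forces $v_\pi(p')>0$, so the ``both valuations positive'' configuration you contradict is guaranteed to occur for any prime dividing $p$.
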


\begin{proof}
First, suppose that $\frac{\log(p)}{\log(q)} > \frac{\log(p')}{\log(q')}$. Let $\beta = \frac{\log(q')}{\log(q)}$. So $p^{\beta} > p'$. Consider the real sequence defined by $\mathcal{Y}_0 = 1$ and 
\[
    \mathcal{Y}_{n+1} = 
    \begin{cases}
    p\mathcal{Y}_n, \quad &\textrm{if } h_{n} \in \mathcal{H}(R) \setminus \mathcal{H}(R') \\
    \frac{\mathcal{Y}_n}{p'}, \quad &\textrm{if } h_{n} \in \mathcal{H}(R') \setminus \mathcal{H}(R) \\
    \frac{p\mathcal{Y}_n}{p'}, \quad &\textrm{if } h_{n} \in \mathcal{H}(R) \cap \mathcal{H}(R')
    
    \end{cases}
\]
Suppose $h_n = a\log(q)$. We have that
\[ \mathcal{Y}_{n+1} = p^{a+1} \Big(\frac{1}{p'}\Big)^{\lfloor \frac{a\log(q)}{\log(q')} \rfloor + 1} \geq p^{a+1} \Big(\frac{1}{p'}\Big)^{ \frac{a\log(q)}{\log(q')} + 1} = \frac{p}{p'} \Big(\frac{p^\beta}{p'}\Big)^{\frac{a}{\beta}} \]
The right-hand side of the above goes to infinity as $a$ goes to infinity and hence $\mathcal{Y}_n$ is unbounded. Thus $\XX_n$ is also unbounded since $\XX_n \geq \mathcal{Y}_n$ for all $n \in \N_0$. 

Now suppose that $\frac{\log(p)}{\log(q)} = \frac{\log(p')}{\log(q')}$ and $p, p'$ are not powers of a common integer. This implies that $\log(q')/\log(q)$ is irrational. Let $h_l = a\log(q)$ be some arbitrary element of $\mathcal{H}(R)$. Let $b \log(q')$ be the least multiple of $\log(q')$ greater than $a \log(q)$. By \Cref{lemma} we can find some $a' > a$ such that if $b' \log(q')$ is the least multiple of $\log(q')$ greater $a' \log(q)$ then
\[ b'\log(q') - a'\log(q) < b\log(q') - a\log(q) \]
which rearranges to
\[ b'-b < (a' - a)\frac{\log(q)}{\log(q')}\]
If $h_m = a'\log(q) \in \mathcal{H}(R)$ then
\[ \XX_m \geq p^{a' - a}\Big(\frac{1}{p'}\Big)^{b'-b}\XX_l > p^{a'-a}\Big(\frac{1}{p'}\Big)^{(a' - a)\frac{\log(q)}{\log(q')}}\XX_l = \XX_l \]
So $\XX_m > \XX_l$. But both $\XX_m$ and $\XX_l$ are integral and so in fact $\XX_m \geq \XX_l + 1$. We have shown that given $l \in \N_0$ such that $h_l \in \mathcal{H}(R)$ we can find $m > l$ such that $h_m \in \mathcal{H}(R)$ and $\XX_m \geq \XX_l + 1$. Repeating this process gives arbitrarily large values of $\XX_n$ and so the sequence is unbounded. 
\end{proof}

\section{Embeddings of regular rooted trees}

\begin{proposition} \label{thm7}
If (C2) holds then $R(p,q)$ is rough isometric to $R(p',q')$. 
\end{proposition}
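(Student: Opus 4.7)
The plan is to show that both $R(p,q)$ and $R(p',q')$ are rough isometric to a single common reference tree, namely a regular rooted tree with very large valence and very long edges. Condition (C2) gives integers $r \geq 2$, $a, b \geq 1$ with $p = r^a$, $p' = r^b$, while $\log(p)/\log(q) = \log(p')/\log(q')$ rearranges to $aL' = bL$ where $L = \log q$, $L' = \log q'$. Set $\Lambda := bL = aL'$ and let $T := R(r^{ab}, e^{\Lambda})$, so $T$ has $r^{ab}$ children at every vertex and all edges of length $\Lambda$. I will build a rough isometry $R(p,q) \to T$; by symmetry the same construction applies to $R(p',q') \to T$, and the composition proves the proposition.

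The key step is to choose a good cobounded subset. Let $S \subseteq R(p,q)$ be the set of vertices at heights $0, \Lambda, 2\Lambda, \ldots$; these are vertices because $\Lambda = bL$ is an integer multiple of the edge length $L$. A direct count gives $|S \cap \{h = k\Lambda\}| = p^{kb} = r^{abk} = |V(T) \cap \{h = k\Lambda\}|$. A canonical bijection $\phi: S \to V(T)$ can be defined inductively by grouping the $p^b = r^{ab}$ compressed descendants of each $v \in S$ with the $r^{ab}$ children of $\phi(v)$ in $T$. The induced bijection is coarsely order-preserving in the obvious sense.

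The main (but elementary) computation is controlling the metric distortion of $\phi$. For $x, y \in S$ the common ancestor in $R(p,q)$ sits at some height $H$, which is a multiple of $L$ with $0 \leq H \leq \min(h(x), h(y))$, giving $d_S(x,y) = h(x) + h(y) - 2H$. Under $\phi$ the images have common ancestor at the \emph{compressed} height $H' = \lfloor H/\Lambda \rfloor \cdot \Lambda$, so $d_T(\phi x, \phi y) = h(x) + h(y) - 2H'$. Since $0 \leq H - H' \leq (b-1)L$, we obtain $|d_S(x,y) - d_T(\phi x, \phi y)| \leq 2(b-1)L$, so $\phi$ is a $(1, 2(b-1)L)$-rough isometry between $S$ and $V(T)$.

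To conclude, observe that the inclusions $S \hookrightarrow R(p,q)$ and $V(T) \hookrightarrow T$ are isometric with cobounded image: every point of $R(p,q)$ has an ancestor in $S$ at distance less than $\Lambda$, and every point of $T$ lies within $\Lambda/2$ of a vertex. Both inclusions are therefore rough isometries, and composing $R(p,q) \to S \to V(T) \to T$ yields that $R(p,q)$ is rough isometric to $T$. The same argument with $p,q,b$ replaced by $p',q',a$ shows $R(p',q')$ is rough isometric to $T$, and composing one with the inverse of the other finishes the proof. The only real obstacle is the metric-distortion estimate in the third paragraph; once the bijection $\phi$ is fixed by the compressed tree structure, this estimate is routine because intermediate vertex heights between two compressed levels lie in $\{L, 2L, \ldots, (b-1)L\}$, bounded independently of the points involved.
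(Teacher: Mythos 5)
Your proof is correct and is essentially the paper's argument: the core step in both is a level-set bijection showing that $R(p,q)$ is rough isometric to its $s$-fold coarsening $R(p^s,q^s)$ (your map $\phi$ on the $\Lambda$-spaced vertices, with distortion bounded by the block length, is exactly the paper's first case), followed by passing through a common intermediate tree. The only cosmetic difference is the direction of that intermediate: you use the common coarsening $R(r^{ab},e^{\Lambda})$, while the paper uses the common refinement $R(r,q^{1/s})$ and coarsens it in two ways.
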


\begin{proof}

Write $R = R(p,q)$ and $R' = R(p',q')$. Let $b,b'$ denote the basepoints of $R,R'$ respectively. First, suppose that there exists some $s \in \N$ such that $p' = p^s$ and $q' = q^s$. Let $V_s \subset R$ consist of all vertices in $R$ of height $ks \log(q)$ for some $k \in \N_0$. Let $V'$ denote the vertex set of $R'$. We will define a map $f: V_s \rightarrow V'$ inductively. Set $f(b) = b'$. Suppose $v \in V_s$ and $f(v)$ has already been defined. The sets $\tau(v) \eval_{h(v) + s\log(q)}$ and $\mathcal{C}(f(v))$ both have $p^s$ elements. Define $f$ on $\tau(v) \eval_{h(v) + s\log(q)}$ by choosing any bijection between these two sets. Then $f$ is a rough isometry from $V_s$ to $V'$. If we extend $f$ using the nearest point projection $R \rightarrow V_s$ and the inclusion $V' \rightarrow R'$ then we get a rough isometry $R \rightarrow R'$. 

In the general case, write $p = r^s$, $p' = r^t$ where $r \in \N_{\geq 2}$. Then $q' = q^{\frac{t}{s}}$. By the above, we know that $R = R(p,q)$ is rough isometric to $R(r, q^{\frac{1}{s}})$ which is rough isometric to $R(r^t, q^{\frac{t}{s}}) = R(p', q') = R'$. So $R$ is rough isometric to $R'$. 
\end{proof}

\begin{proposition}
If (C1) holds then there exists a rough isometric embedding $R(p,q) \rightarrow R(p',q')$.
\end{proposition}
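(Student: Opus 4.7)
My plan is to exploit (C1) to reduce, via \Cref{thm7}, to a convenient configuration in which I can write down an explicit order-preserving, height-preserving embedding.

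First I set up the reduction. Condition (C1) rearranges to $\log(p)/\log(p') < \log(q)/\log(q')$, so I can pick positive integers $N, M$ with $\log(p)/\log(p') < M/N < \log(q)/\log(q')$; equivalently, $p^N < (p')^M$ and $q^N > (q')^M$. The pair $(p, p^N)$ satisfies (C2) (same Hausdorff ratio, and both powers of $p$), so \Cref{thm7} gives a rough isometry $R(p,q) \to R(p^N, q^N)$, and likewise $R(p',q') \to R((p')^M, (q')^M)$. Writing $P = p^N$, $Q = q^N$, $P' = (p')^M$, $Q' = (q')^M$, it suffices to produce a rough isometric embedding $f: R(P, Q) \to R(P', Q')$ in the convenient setting $P \leq P'$ and $Q \geq Q'$.

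Next I define $f$ inductively on the vertices of $R(P, Q)$. Set $f(b) = b'$, so $h(f(b)) = h(b)$. Given $f(w)$ with $h(f(w)) = h(w)$, let $v_w$ be the unique vertex of $R(P', Q')$ with $v_w \in \{f(w)\} \cup \tau(f(w))$ and $h(v_w) = \lceil h(w)/\log(Q') \rceil \log(Q')$; then $h(v_w) - h(w) \in [0, \log(Q'))$. Since $P \leq P'$, I fix an injection sending the $P$ children $w_1, \ldots, w_P$ of $w$ to distinct outgoing edges $e_1, \ldots, e_P$ at $v_w$. For each $i$, I set $f(w_i)$ to be the unique point at distance $\log(Q)$ from $f(w)$ obtained by going from $v_w$ along $e_i$ and then continuing along a fixed canonical downward geodesic in $R(P', Q')$; by construction $h(f(w_i)) = h(w_i)$. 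Finally I extend $f$ linearly over each edge of $R(P, Q)$ using the length-$\log(Q)$ path from $f(w)$ to $f(w_i)$. For vertices $u, v \in R(P,Q)$ with $w = u \wedge v$: either $u = w$ or $v = w$, in which case $f(u) \wedge f(v) = f(w)$ with matching height; or $u, v$ lie in subtrees of distinct children $w_i, w_j$ of $w$, in which case $f(u)$ and $f(v)$ lie in disjoint subtrees of $v_w$ and so $f(u) \wedge f(v) = v_w$ with $h(v_w) - h(w) < \log(Q')$. Combined with exact height preservation, this gives $|d(f(u), f(v)) - d(u, v)| < 2\log(Q')$ on vertices, and the linear extension on edges adds only a bounded constant. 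Composing with the rough isometries from the reduction step produces the desired rough isometric embedding $R(p,q) \to R(p',q')$.

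The main obstacle is that $f$ must be not merely quasiisometric but rough isometric, i.e., preserve distances up to an additive constant. This forces a strong form of coarse injectivity: cousins cannot be mapped too close together. The reduction to $P \leq P'$ is exactly what lets me fan out the $P$ children of each vertex of $R(P,Q)$ into $P$ distinct branches at $v_w$, and the companion inequality $Q \geq Q'$ ensures each edge of $R(P,Q)$ is long enough to reach the next branching vertex of $R(P',Q')$, so that this fanning-out can always be performed before the image paths are forced to split in a way we cannot control.
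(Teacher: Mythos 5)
Your proof is correct and follows essentially the same route as the paper: both use \Cref{thm7} to pass to powers with $p^N \le (p')^M$ and $q^N > (q')^M$, and then fan the children of each vertex injectively into distinct branches at the next available branching vertex of the target tree, which keeps the additive error below $2\log(Q')$. The only cosmetic difference is that the paper factors the final embedding into an isometric embedding $R(p^a,q^a) \hookrightarrow R((p')^b,q^a)$ followed by its special case $p=p'$, $q>q'$, whereas you carry out both steps in a single explicit construction.
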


\begin{proof}
We will first show that if $p = p'$ and $q > q'$ then there exists a rough isometric embedding $f: R(p,q) \rightarrow R(p',q')$. We will first define $f$ on the vertex set $V$ of $R(p,q)$. Set $f(b) = b'$. If $v \in V$ and $f(v)$ is already defined then we define $f$ on $\mathcal{C}(v)$ as follows. Write $\mathcal{C}(v) = \{ v_1, ..., v_p \}$. We know that $\tau(f(v)) \eval_{h(v) + \log(q)}$ has cardinality $p^k$ for some $k \in \N$. Map $\mathcal{C}(v)$ into this set injectively. If we precompose this map $V \rightarrow R(p',q')$ with the nearest point projection $R(p,q) \rightarrow V$ then we get a rough isometric embedding $R(p,q) \rightarrow R(p',q')$.

Now suppose the general case holds, i.e. $\frac{\log(p)}{\log(q)} < \frac{\log(p')}{\log(q')}$. Say $\frac{\log(p')}{\log(q')} = (1+ \xi) \frac{\log(p)}{\log(q)}$ where $\xi > 0$. Evidently, we can find $a, b \in \N$ such that
\begin{equation} \label{iq}
\log(p) \leq \frac{b}{a} \log(p') < (1 + \xi)\log(p)
\end{equation}
Then
\[ \frac{b \log(p')}{a \log(q)} < (1 + \xi)\frac{\log(p)}{\log(q)} = \frac{\log(p')}{\log(q')} \]
and so $a \log(q) > b \log(q')$, or, equivalently, $q^a > (q')^b$. By \Cref{thm7}, we know that $R(p,q)$ is rough isometric to $R(p^a, q^a)$. Further, it is clear there exists an isometric embedding of $R(p^a, q^a)$ into $R((p')^b, q^a)$ since, by \eqref{iq}, $(p')^b \geq p^a$. Then, by our work above, there exists a rough isometric embedding of $R((p')^b, q^a)$ into $R((p')^b, (q')^b$ since $q^a > (q')^b$. Finally, we know that $R((p')^b), (q')^b)$ is rough isometric to $R(p', q')$. So there exists a rough isometric embedding of $R(p,q)$ into $R(p',q')$.
\end{proof}

\section{Equivalence of the embeddings}

\subsection{Generalising Farb--Mosher}

In this section we will prove that (A5) $\implies$ (A4). 

\begin{definition*}
Let $\Q(p,q)$ denote the set of bi-infinite sequences $(a_n)_{n \in \Z}$ on the alphabet $\{0, ..., p-1\}$ such that $a_n = 0$ for all sufficiently small $n$. We can put a metric on $\Q(p,q)$ by setting $\rho((a_n), (b_n)) = q^{-N}$ where $a_n = b_n$ for $n \leq N$ and $a_{N+1} \neq b_{N+1}$. A \textit{clone} of $\Q(p,q)$ is a subset consisting of all words beginning with some fixed word $w$. Every clone in $\Q(p,q)$ is bilipschitz homeomorphic to $\Z(p,q)$. 
\end{definition*}

Let $\partial^u HT(p,q)$ denote the set of hyperbolic planes in $HT(p,q)$. Via the projection map $\pi: HT(p,q) \rightarrow T(p,q)$, the set $\partial^u HT(p,q)$ is in bijection with the set of height-increasing bi-infinite geodesics in $T(p,q)$. This, in turn, can be seen to be in bijection with $\Q(p,q)$. We can put a metric on $\partial^u HT(p,q)$ as follows. If $Q_1, Q_2 \in \partial^u HT(p,q)$ then set $d(Q_1, Q_2) = e^{-h(\sigma)}$ where $\sigma$ is the horocycle $\partial(Q_1 \cap Q_2)$. With this metric $\partial^u HT(p,q)$ is isometric to $\Q(p,q)$. Let $m,n \in \N_{\geq 2}$ and suppose $f: HT(m,m) \rightarrow HT(n,n)$ is a quasiisometry. In the paper of Farb and Mosher \cite{FM} (Proposition 4.1) they prove that if $Q \in \partial^u HT(m,m)$ then there exists a unique $Q' \in \partial^u HT(n,n)$ such that $d_H(f(Q), Q') < \infty$ and they write $f^u(Q) = Q'$. They also prove that, with respect to the metrics on $\partial^u HT(m,m)$ and $\partial^u HT(n,n)$, $f^u$ is a bilipschitz homeomorphism (Theorem 6.1 of \cite{FM}). So a quasiisometry $f: HT(m,m) \rightarrow HT(n,n)$ induces a bilipschitz homeomorphism $f^u: \Q(m,m) \rightarrow \Q(n,n)$. Let $C \subset \Q(m,m)$ be some clone. Since $f^u$ is a bilipschitz homeomorphism and $C$ has finite diameter, $f^u(C)$ also has finite diameter and hence is contained in some clone $C' \subset \Q(n,n)$. Thus, since $C$ is bilipschitz homeomorphic to $\Z(m,m)$ and $C'$ is bilipschitz homeomorphic to $\Z(n,n)$, we have an induced bilipschitz embedding $\Z(m,m) \rightarrow \Z(n,n)$. In summary, Farb and Mosher prove that a quasiisometry $HT(m,m) \rightarrow HT(n,n)$ induces a bilipschitz embedding $\Z(m,m) \rightarrow \Z(n,n)$. 

We would like to generalise this result in two ways. First, we would like to show that it holds if we no longer require $f$ to be coarse surjective. That is, we assume only that $f$ is a quasiisometric embedding. Second, we would like to show that it holds if we replace $HT(m,m)$ with $HT(p,q)$ and $HT(n,n)$ with $HT(p,q)$. Put concisely: we would like to prove that a quasiisometric embedding $HT(p,q) \rightarrow HT(p',q')$ induces a bilipschitz embedding $\Z(p,q) \rightarrow \Z(p',q')$. Therefore, we need to analyse - up until the conclusion of the proof of Theorem 6.1 - all the instances when Farb and Mosher use the coarse surjectivity of $f$ and all the instances when they use the fact that $p = q$ and $p' = q'$. One of these questions is easy to answer: the proof never once uses the fact that $p = q$ and $p' = q'$ so we can replace $HT(m,m)$ with $HT(p,q)$ and $HT(n,n)$ with $HT(p',q')$. The coarse surjectivity of $f$ is used once, in the proof of Lemma 5.1. Thus we will need to reprove this lemma without ever using coarse surjectivity. This is the content of \Cref{relem} below. 

\begin{remark*}
It is important to note that Theorem 7.2 of the Farb--Mosher paper is not actually proved as it is written. Theorem 7.2 says that if there is a bilipschitz embedding $\Z(m,m) \rightarrow \Z(n,n)$ then $m,n$ are powers of a common integer. However, in the paper Cooper actually proves Corollary 10.11: that if there is a bilipschitz embedding $\Z(m,m) \rightarrow \Z(n,n)$ \textit{onto a clopen} then $m,n$ are powers of a common integer. Indeed, the proof relies on the fact that the image is a clopen set in $\Z(n,n)$. They can assume this extra condition since they have proved that $f^u: \Q(m,m) \rightarrow \Q(n,n)$ is a bilipschitz homeomorphism (which follows from the coarse surjectivity of $f$) and hence the image of $C \subset \Q(m,m)$ will be clopen in $C' \subset \Q(n,n)$. In our case, when $f$ is not coarse surjective, we know only that $f^u$ is a bilipschitz embedding. However, it follows from the work in this paper that Theorem 7.2 is nonetheless true. 
\end{remark*}

So suppose we have some $(K,C)$-quasiisometric embedding $f: HT(p,q) \rightarrow HT(p',q')$. For simplicity of notation, write $HT(p,q) = X$, $HT(p',q') = X'$. A generalisation of Proposition 4.1 in Farb--Mosher implies that there exists a constant $A \geq 0$, only depending on $K$ and $C$, such that for all $Q \in \partial^u X$ there exists some unique $Q' \in \partial^u X'$ such that $d_H(f(Q), Q') \leq A$. We write $f^u(Q) = Q'$. We can use the map $f^u$ to define a map $\theta_f: T(p,q) \rightarrow T(p',q')$ as follows (here we are identifying $T(p,q)$ with the set of horocycles in $X = HT(p,q)$ and similarly for $T(p',q')$). Suppose we have some horocycle $\sigma \subset HT(p,q)$. Let $\mu$ be the first branching horocycle above $\sigma$; say $\mu = \partial(Q_1 \cap Q_2)$ where $Q_1, Q_2 \in \partial^u X$. Let $Q_i' \in \partial^u X'$ have bounded Hausdorff distance from $f(Q_i)$. Then we set $\theta_f(\sigma) = \partial(Q_1' \cap Q_2')$. The following lemma removes the coarse surjectivity requirement of Lemma 5.1 of Farb--Mosher. 

\begin{lemma} \label{relem}
Given $K \geq 1$, $C \geq 0$, there exists a constant $\lambda \geq 0$ such that if $f:X \rightarrow X'$ is a $(K,C)$-quasiisometric embedding, then for each horocycle $\sigma \subset X$ we have
\[ d_H(f(\sigma), \theta_f(\sigma)) \leq \lambda \]
\end{lemma}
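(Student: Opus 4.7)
The plan is to prove the two Hausdorff inclusions separately, and to handle the second one without appealing to coarse surjectivity of $f$ — this is the sole place the Farb--Mosher argument needed it. Fix a horocycle $\sigma \subset X$, let $\mu$ be the first branching horocycle above $\sigma$ with $\mu = \partial(Q_1 \cap Q_2)$ and write $Q_i' = f^u(Q_i)$, so that $\theta_f(\sigma)$ lies inside the common hyperbolic planes $Q_1',Q_2'$. By the generalised Proposition 4.1 there is a uniform $A = A(K,C)$ with $d_H(f(Q_i), Q_i') \leq A$, and the earlier material in the section shows that $f$ is coarsely height-preserving and that $\theta_f$ preserves horocycle heights up to bounded additive error. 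Absorb all of these additive constants into a single $B = B(K,C)$.

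For $f(\sigma) \subseteq \mathcal{N}_\lambda(\theta_f(\sigma))$: pick $x \in \sigma \subset Q_1$; then $f(x)$ is within $A$ of some $x' \in Q_1'$. Coarse height preservation gives $\absval{h(x') - h(\sigma)} \leq B$, and the height control on $\theta_f$ gives $\absval{h(\theta_f(\sigma)) - h(\sigma)} \leq B$. Hence $x'$ lies on the hyperbolic plane $Q_1'$ at height within $2B$ of the height of the horocycle $\theta_f(\sigma) \subset Q_1'$, and horocycles of $Q_1' \cong \hyp^2$ whose heights differ by $t$ are within Hausdorff distance $t$ of one another. Therefore $d(f(x), \theta_f(\sigma)) \leq A + 2B$.

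For $\theta_f(\sigma) \subseteq \mathcal{N}_\lambda(f(\sigma))$: take $x' \in \theta_f(\sigma) \subset Q_1'$. Since $d_H(f(Q_1), Q_1') \leq A$, choose $y \in Q_1$ with $d(f(y), x') \leq A$. As above, coarse height preservation and the height control on $\theta_f$ yield $\absval{h(y) - h(\sigma)} \leq 3B$. Because $\sigma$ is the horocycle on $Q_1 \cong \hyp^2$ at height $h(\sigma)$, there is some $z \in \sigma$ with $d(y,z) \leq 3B$, whence $d(f(y), f(z)) \leq 3KB + C$ and so $d(x', f(z)) \leq A + 3KB + C$. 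Taking $\lambda = \max\{A + 2B,\ A + 3KB + C\}$ gives the required bound, dependent only on $K$ and $C$.

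The main obstacle is the second inclusion: Farb--Mosher produce a preimage of $x'$ under $f$ using coarse surjectivity, which is not available here. The key observation that bypasses this is that $\theta_f(\sigma) \subset Q_1' = f^u(Q_1)$ by \emph{construction}, so the defining Hausdorff bound on $f^u$ already furnishes a preimage $y \in Q_1$ with $d(f(y), x') \leq A$ — no surjectivity of $f$ is used. Coarse height preservation then pins $y$ to a bounded neighbourhood of the specific horocycle $\sigma \subset Q_1$, and the quasiisometric embedding inequality transports this back to a bound on $d(x', f(\sigma))$.
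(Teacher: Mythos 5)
There is a genuine gap, and it sits exactly at the point where the lemma is hard. Both of your inclusions lean on the assertions that ``$f$ is coarsely height-preserving'' and that ``$\theta_f$ preserves horocycle heights up to bounded additive error,'' attributed to earlier material in the section. No such facts are available there: the only inputs at this stage are the plane-to-plane correspondence $d_H(f(Q_i),Q_i')\leq A$ and the first inclusion $f(\sigma)\subset\mathcal N_{\lambda_1}(\theta_f(\sigma))$ quoted from Farb--Mosher. Coarse height preservation of a quasiisometric embedding $HT(p,q)\to HT(p',q')$ is essentially what this lemma (and the surrounding horocycle-rigidity argument) is in the business of establishing, so invoking it here is circular. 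Concretely, in your second inclusion you choose $y\in Q_1$ with $d(f(y),x')\leq A$ and then claim $\absval{h(y)-h(\sigma)}\leq 3B$; but all you know about $f\eval_{Q_1}$ is that it is a quasiisometry of one copy of $\hyp^2$ onto (a neighbourhood of) another, and quasiisometries of $\hyp^2$ need not respect Busemann functions or horocycles at all. The point $y$ could a priori be arbitrarily far from $\sigma$. (The additive bound $\absval{h(\theta_f(\sigma))-h(\sigma)}\leq B$ is likewise unjustified, and is false in general when $q\neq q'$.)

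The paper's proof avoids height functions entirely and instead exploits the branching at $\mu=\partial(Q_1\cap Q_2)$: given $z\in\sigma'$, it picks $z_1\in Q_1'\setminus Q_2'$ and $z_2\in Q_2'\setminus Q_1'$ at distance $2A+1$ below $\sigma'$, pulls them back to $x_i\in Q_i$ via the Hausdorff bound, and observes that $d(f(x_1),Q_2')\geq A+1$ forces $x_1\in Q_1\setminus Q_2$ (and symmetrically for $x_2$). Since $f(x_1)$ and $f(x_2)$ are uniformly close, so are $x_1$ and $x_2$; but two nearby points lying in different branches below the branching horocycle $\mu$ must both lie close to $\mu$, hence close to $\sigma$. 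That two-plane argument is the idea your proposal is missing; without it, the step pinning $y$ near $\sigma$ has no support.
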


\begin{proof}
In the proof of Lemma 5.1 of Farb--Mosher, they prove that there exists a constant $\lambda_1 = \lambda_1(K,C) \geq 0$ such that for each horocycle $\sigma \subset X$ we have
\[ f(\sigma) \subset \mathcal{N}_{\lambda_1}(\sigma') \]
where $\sigma' = \theta_f(\sigma)$. In order to show that there exists some $\lambda_2 = \lambda_2(K,C) \geq 0$ such that $\sigma' \subset \mathcal{N}_{\lambda_2}(f(\sigma))$, Farb and Mosher use the coarse inverse of $f$ which we no longer have. However, we can apply an alternative argument. 

Let $z \in \sigma'$. Let $\mu$ be the branching horocycle above $\sigma$ such that $\theta_f(\sigma) = \theta_f(\mu)$. Let $Q_1, Q_2 \subset X$ be the two hyperbolic planes in $X$ such that $\mu = \partial(Q_1 \cap Q_2)$ and let $Q_1', Q_2' \subset X'$ be the two hyperbolic planes in $X'$ such that $\sigma' = \partial(Q_1' \cap Q_2')$. Recall that for $i = 1,2$ we have
\[ d_H(f(Q_i), Q_i') \leq A \]
where $A$ only depends on $K,C$. Let $z_1$ be the unique point of $Q_1' \setminus Q_2'$ satisfying $d(z_1, z) = d(z_1, \sigma') = d(z_1, Q_2') = 2A + 1$. Similarly, Let $z_2$ be the unique point of $Q_2' \setminus Q_1'$ satisfying $d(z_2, z) = d(z_2, \sigma') = d(z_2, Q_1') = 2A + 1$. We know there exists some $x_i \in Q_i$ such that $d(f(x_i), z_i) \leq A$. Thus, $d(f(x_1), Q_2') \geq A+1$ and $d(f(x_2), Q_1') \geq A+1$. Hence, $x_1 \in Q_1 \setminus Q_2$ and $x_2 \in Q_2 \setminus Q_1$. Also,
\[ d(f(x_1), f(x_2)) \leq 2(2A + 1) + 2A = 6A + 2\]
and so
\[ d(x_1, x_2) \leq K(6A + 2) + C\]
which implies that 
\[ d(x_i, \mu) \leq K(6A + 2) + C\]
Let $w_i$ be the unique point of $\sigma$ realising $d(\sigma, x_i)$. Then $d(w_i, x_i) \leq d(x_i, \mu) + \log(q)$. We know that
\[ d(f(w_i), f(x_i)) \leq K(K(6A + 2) + C + \log(q)) + C\]
and so
\[ d(f(w_i), z) \leq K(K(6A + 2) + C + \log(q)) + C + (3A + 1) \]
and so we are done.
\end{proof}

The rest of Farb and Mosher's argument goes through almost verbatim - one only has to change $m$ and $n$ to $p,q,p',q'$ as appropriate. It follows that if $f: HT(p,q) \rightarrow HT(p',q')$ is a quasiisometric embedding then $f^u: \Q(p,q) \rightarrow \Q(p',q')$ is a bilipschitz embedding. Let $C \subset \Q(p,q)$ be some clone. We know that $f^u(C)$ has finite diameter and hence is contained in some clone $C' \subset \Q(p',q')$. We know that $C$ and $C'$ are bilipschitz homeomorphic to $\Z(p,q)$ and $\Z(p',q')$ respectively. So we have proved the following. 

\begin{proposition}
If there exists a quasiisometric embedding $HT(p,q) \rightarrow HT(p',q')$ then there exists a bilipschitz embedding $\Z(p,q) \rightarrow \Z(p',q')$. 
\end{proposition}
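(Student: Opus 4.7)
The plan is to essentially run the Farb--Mosher boundary map machinery verbatim, with the technical input of \Cref{relem} replacing the one step where they used coarse surjectivity. Let me sketch how the pieces fit together.

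First I would invoke the (already-stated) generalisation of Farb--Mosher's Proposition 4.1 to produce, for each $Q \in \partial^u X$, a unique hyperbolic plane $f^u(Q) \in \partial^u X'$ with $d_H(f(Q), f^u(Q)) \le A$, where $A = A(K,C)$. This defines the boundary map $f^u: \partial^u X \to \partial^u X'$ and, via the identification $\partial^u X \cong \Q(p,q)$, a map $f^u: \Q(p,q) \to \Q(p',q')$. At the same time, the horocycle map $\theta_f: T(p,q) \to T(p',q')$ defined in the setup prior to \Cref{relem} gives a consistent shadow of $f$ on the tree factor.

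The key step is that \Cref{relem} now allows me to show $f^u$ is a bilipschitz embedding. The point is that the metric on $\partial^u X$ is defined via $d(Q_1, Q_2) = e^{-h(\sigma)}$, where $\sigma = \partial(Q_1 \cap Q_2)$ is a branching horocycle; and \Cref{relem} bounds the Hausdorff distance between $f(\sigma)$ and $\theta_f(\sigma)$ uniformly in $\sigma$. This is exactly the ingredient Farb--Mosher use in their proof of Theorem 6.1 to turn the coarsely-defined $f^u$ into a genuine bilipschitz map on the boundary. Crucially, inspecting their proof of Theorem 6.1 one sees it never uses $p=q$ or $p'=q'$ — only the product structure and the horocycle bound — so the exact same argument goes through to give bilipschitz constants depending only on $K$, $C$, $p$, $q$, $p'$, $q'$.

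Once $f^u: \Q(p,q) \to \Q(p',q')$ is known to be bilipschitz, the reduction to symbolic Cantor sets is straightforward. Pick any clone $C \subset \Q(p,q)$; it has finite diameter, so $f^u(C)$ has finite diameter too, and hence is contained in some clone $C' \subset \Q(p',q')$ (a clone in $\Q(p',q')$ is exactly a subset of diameter at most $(q')^{-N}$ sitting in the right position, and any finite-diameter set lies in such a clone of large enough scale). Since $C$ is bilipschitz homeomorphic to $\Z(p,q)$ and $C'$ is bilipschitz homeomorphic to $\Z(p',q')$, composing bilipschitz maps gives the desired bilipschitz embedding $\Z(p,q) \to \Z(p',q')$.

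The main obstacle is the one the author already surmounted with \Cref{relem}: showing that $\theta_f(\sigma)$ lies in a bounded neighbourhood of $f(\sigma)$ without a coarse inverse. Beyond that, the proof is a bookkeeping exercise of checking that Farb and Mosher's argument from Proposition 4.1 through Theorem 6.1 never truly relied on $p=q$ nor on coarse surjectivity except at the one spot now handled, and then the clone-to-Cantor-set reduction is immediate from finite diameter.
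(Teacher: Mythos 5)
Your proposal is correct and follows essentially the same route as the paper: invoke the generalised Proposition 4.1 to define $f^u$, use \Cref{relem} in place of the single appeal to coarse surjectivity in Farb--Mosher's Lemma 5.1, observe that the remainder of their argument through Theorem 6.1 never uses $p=q$ or $p'=q'$, and finish with the clone-to-Cantor-set reduction via finite diameter. No gaps.
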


\subsection{Functors}

In this section we will prove that (A4) $\iff$ (A3) and (B3) $\iff$ (B2). Much of this section is simply an application of the ideas contained in the paper of Bonk and Schramm \cite{BS} on the functors $\partial$ and $\con$ between Gromov-hyperbolic metric spaces and their boundary. However, for the purposes of this paper, it is simpler to `start from scratch' as opposed to translating their more general work over to the needs of our specialised situation. 

Suppose for every vertex $v$ of $R(p,q)$ we arbitrarily label the edges emanating from $v$ with the letters $\{ 0, ..., p-1 \}$. This edge labelling provides a natural identification of $\Z(p,q)$ with the set of geodesic rays based at the basepoint $b$ of $R(p,q)$. Throughout this section we make this identification. We begin with definitions.

\begin{definition*}
Suppose we have a pair of maps $f, g: R(p,q) \rightarrow R(p',q')$ between regular rooted trees. We say that $f$ and $g$ are \textit{roughly equivalent} if $d(f,g) < \infty$. We denote by $[f]$ the rough equivalence class of some map $f$ between regular rooted trees. 
\end{definition*}

\begin{definition*}
Let $\mathcal{C}$ be the category consisting of regular rooted trees $R(p,q)$ and rough isometric embeddings between them \textit{up to rough equivalence}.
\end{definition*}

\begin{definition*}
Let $\mathcal{D}$ denote the category of symbolic Cantor sets $\Z(p,q)$ and bilipschitz embeddings between them. 
\end{definition*}

We have the following theorem whose proof will be the content of this section.

\begin{theorem} \label{isothm}
There exists a category isomorphism $\partial: \mathcal{C} \rightarrow \mathcal{D}$ with inverse functor $\Delta: \mathcal{D} \rightarrow \mathcal{C}$.
\end{theorem}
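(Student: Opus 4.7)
The plan is to construct both functors explicitly and verify they are mutually inverse. Throughout I identify $\Z(p,q)$ with the space of geodesic rays based at $b$ in $R(p,q)$ via the edge labelling, so that the metric satisfies $\rho(\gamma_1,\gamma_2)=e^{-h(\gamma_1\wedge\gamma_2)}$, and a vertex $v$ at height $h$ corresponds to the clopen ball $B_v\subseteq\Z(p,q)$ of diameter $e^{-h}$ consisting of rays through $v$.

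For the functor $\partial$, I set $\partial R(p,q)=\Z(p,q)$ on objects. Given a morphism $[f]: R(p,q)\to R(p',q')$, I pick a height-preserving representative $f$ via \Cref{chpcop}, which is automatically $A$-coarsely order-preserving for some $A$. For each ray $\gamma$ from $b$, the image $f(\gamma)$ lies at bounded Hausdorff distance from a unique geodesic ray $\gamma'$ from $b'$, and I set $\partial f(\gamma)=\gamma'$. Comparing $d(f(x_1),f(x_2))=2(H-h')$ with $d(x_1,x_2)=2(H-h)$ for points $x_i\in\gamma_i$ at a common large height $H$ gives $|h'-h|\le A/2$ for the branching heights, so $\partial f$ is $e^{A/2}$-bilipschitz. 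Independence of representative and functoriality are then routine, since two rough-equivalent maps produce the same limit ray.

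For the functor $\Delta$, I set $\Delta\Z(p,q)=R(p,q)$ on objects. Given a $K$-bilipschitz embedding $g: \Z(p,q)\to\Z(p',q')$ and a vertex $v$ at height $h$, the image $g(B_v)$ has diameter in $[e^{-h}/K,Ke^{-h}]$ and is therefore contained in a unique smallest clopen of $\Z(p',q')$, which I declare to be $B_{\Delta g(v)}$; I extend $\Delta g$ affinely along each edge onto the unique geodesic joining the images of its endpoints. Rounding to vertex heights yields $|h(\Delta g(v))-h|\le\log(q'K)$. For vertices $u,v$ with meet $w$, the containment $g(B_u),g(B_v)\subseteq g(B_w)\subseteq B_{\Delta g(w)}$ shows that $\Delta g(w)$ is a common ancestor of $\Delta g(u)$ and $\Delta g(v)$; in the generic case where $\Delta g(u)\ne\Delta g(v)$, since $g(B_u)$ and $g(B_v)$ live in distinct child-subtrees of their meet, the $\rho'$-distance between them equals exactly $e^{-h''}$ where $h''$ is the height of $\Delta g(u)\wedge\Delta g(v)$, and the bilipschitz bound $K^{-1}e^{-h(w)}\le d_{\rho'}(g(B_u),g(B_v))\le Ke^{-h(w)}$ pins $|h''-h(w)|\le\log K$; in the degenerate case $\Delta g(u)=\Delta g(v)$ the same bilipschitz comparison forces $h_u-h_w,h_v-h_w\le 2\log K+\log q'$, so $d(u,v)$ is itself bounded. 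Summing these height bounds yields a rough isometric embedding with additive error at most $4\log K+2\log q'$, and the same rounding argument shows that $\Delta$ respects composition up to rough equivalence.

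To verify $\partial\circ\Delta=\mathrm{id}_{\mathcal{D}}$, fix a ray $\gamma=(v_0,v_1,\ldots)$ in $R(p,q)$; since $g(\gamma)\in g(B_{v_n})\subseteq B_{\Delta g(v_n)}$ for every $n$, the ray $g(\gamma)$ passes through every vertex $\Delta g(v_n)$, and because $h(\Delta g(v_n))\to\infty$ it is the unique ray at bounded Hausdorff distance from $\Delta g(\gamma)$, giving $\partial(\Delta g)(\gamma)=g(\gamma)$ exactly. To verify $\Delta\circ\partial=\mathrm{id}_{\mathcal{C}}$, take a height-preserving $A$-coarsely order-preserving $f$; for any pair $\gamma_1,\gamma_2\in B_v$ the source meet satisfies $h(\gamma_1\wedge\gamma_2)\ge h(v)$, so the estimate from the definition of $\partial$ places $h(\partial f(\gamma_1)\wedge\partial f(\gamma_2))\ge h(v)-A/2$, and consequently $\partial f(B_v)\subseteq B_{v^*}$ for the unique deepest vertex $v^*$ at height within $A/2+\log q'$ of $h(v)$. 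A direct estimate using the bounded Hausdorff distance from $f(\gamma)$ to $\partial f(\gamma)$ and the fact that $f(v)$ lies on $f(\gamma)$ at height $h(v)$ then shows $d(v^*,f(v))$ is bounded uniformly in $v$, hence $d(\Delta(\partial f),f)<\infty$ and $[\Delta(\partial f)]=[f]$. The main obstacle is the careful tracking of rounding from diameters to vertex heights in the ultrametric smallest-clopen construction, combined with the degenerate case $\Delta g(u)=\Delta g(v)$ and the localization of $v^*$ near $f(v)$; the latter relies essentially on the coarse order-preserving property supplied by \Cref{chpcop}, since without it the $\partial f$-images of rays through $v$ could fail to cluster around a single vertex close to $f(v)$.
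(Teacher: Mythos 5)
Your construction coincides with the paper's: $\partial$ is the boundary map obtained from the Morse lemma, $\Delta$ is the vertex map given by minimal clones containing images of clones, and your verifications track \Cref{prop}, \Cref{prop1} and \Cref{prop2} step for step (your degenerate case $\Delta g(u)=\Delta g(v)$ is handled correctly, and your exact equality $\partial(\Delta g)(\gamma)=g(\gamma)$ via nested clones of vanishing diameter is precisely the paper's argument).

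There is one genuine gap, in the direction $\Delta\circ\partial=\mathrm{id}_{\mathcal{C}}$. You defined $\Delta(\partial f)(v)$ to be the vertex of the \emph{minimal} clone containing $\partial f(B_v)$, but in the verification you silently replace it by $v^*$, the deepest vertex at height within $A/2+\log q'$ of $h(v)$ satisfying $\partial f(B_v)\subseteq B_{v^*}$. These need not be the same vertex: your estimate $h(\partial f(\gamma_1)\wedge\partial f(\gamma_2))\ge h(v)-A/2$ only bounds $\diam\,\partial f(B_v)$ from \emph{above}, so it only shows that the minimal clone is \emph{contained in} $B_{v^*}$; a priori the image could be concentrated in a tiny subclone far below $v^*$, in which case $d(\Delta(\partial f)(v),f(v))$ would be unbounded even though $d(v^*,f(v))$ is controlled. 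To close this you need the matching \emph{lower} bound on $\diam\,\partial f(B_v)$: choose $\gamma_1,\gamma_2\in B_v$ entering distinct children of $v$, so that $h(\gamma_1\wedge\gamma_2)=h(v)$ exactly, and apply the two-sided estimate $\absval{h(\gamma_1\wedge\gamma_2)-h(\partial f(\gamma_1)\wedge\partial f(\gamma_2))}\le \text{const}$ that you already established when proving $\partial f$ bilipschitz. This forces $\diam\,\partial f(B_v)\ge e^{-(h(v)+\text{const})}$, hence the minimal clone has height at most $h(v)+\text{const}$, and your subsequent estimate (a point of $\partial f(\gamma)$ lies within the Morse constant of $f(v)$, and both that point and the minimal-clone vertex lie on the ray $\partial f(\gamma)$ at comparable heights) then applies to the minimal clone itself. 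This is exactly the role of the sentence ``by the minimality of $C(w)$, there must exist $\gamma,\eta\in C(v)$\dots'' in the paper's \Cref{prop2}. A cosmetic remark: your identity $d(f(x_1),f(x_2))=2(H-h')$ in the $\partial$ step is only an equality up to an additive constant, since $f(x_i)$ lies merely within the Morse constant of $\gamma_i'$; this changes constants but nothing else.
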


Evidently, \Cref{isothm} implies that (A4) $\iff$ (A3) and (B3) $\iff$ (B2). We begin by defining $\partial: \mathcal{C} \rightarrow \mathcal{D}$. 

\begin{definition*}
We set $\partial R(p,q) = \Z(p,q)$. Let $f: R(p,q) \rightarrow R(p',q')$ be a rough isometric embedding. Let $\gamma \in \Z(p,q)$. It follows from the Morse Lemma that there exists some $\gamma' \in \Z(p',q')$ with $d_H(f(\gamma), \gamma') < \infty$. Indeed, such a $\gamma'$ must be unique since distinct geodesic rays in $R(p',q')$ have infinite Hausdorff distance. We set $\partial [f] (\gamma) = \gamma'$. 
\end{definition*}

One needs to check that $\partial [f]$ is well-defined: if $f$ is roughly equivalent to $g$ then $d_H(f(\gamma), g(\gamma)) < \infty$ and so $\gamma'$ must be the same in both cases. It is also simple to check that $\partial$ is functorial. Some notation.

\begin{itemize}
    \item Suppose $\gamma, \eta \in \Z(p,q)$. Let $\gamma \wedge \eta$ denote the point of $\gamma \cap \eta \subset R(p,q)$ of maximal height. 
\end{itemize}

\begin{proposition} \label{prop}
Let $f: R(p,q) \rightarrow R(p',q')$ be a rough isometric embedding. Then $\partial [f]: \Z(p,q) \rightarrow \Z(p',q')$ is a bilipschitz embedding. 
\end{proposition}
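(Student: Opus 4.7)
The plan is to express both metrics in terms of meeting heights of geodesic rays, and then to show that $\partial[f]$ distorts these heights by at most an additive constant. Under the identification of $\Z(p,q)$ with geodesic rays based at the basepoint, we have $\rho(\gamma, \eta) = e^{-h(\gamma \wedge \eta)}$: the sequences agree on their first $N$ letters iff the rays share their first $N$ edges, so $h(\gamma \wedge \eta) = N \log q$ and $\rho(\gamma, \eta) = q^{-N}$. The analogous formula holds in $\Z(p',q')$, so the proposition reduces to producing a uniform constant $D \geq 0$ with
\[ \bigl| h'(\partial[f](\gamma) \wedge \partial[f](\eta)) - h(\gamma \wedge \eta) \bigr| \leq D \]
for all distinct $\gamma, \eta \in \Z(p,q)$.

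Using \Cref{chpcop}, we may replace $f$ by a height-preserving representative of its rough equivalence class, which does not change $\partial[f]$. Say $f$ is $A$-rough isometric and height-preserving, and let $L = L(A)$ be the uniform Morse constant, so that $d_H(f(\gamma), \partial[f](\gamma)) \leq L$ for every $\gamma \in \Z(p,q)$. Write $v = \gamma \wedge \eta$, $\alpha = \partial[f](\gamma)$, $\beta = \partial[f](\eta)$, and $v' = \alpha \wedge \beta$. For the lower bound $h'(v') \geq h(v) - L$: the point $f(v)$ lies on both $f(\gamma)$ and $f(\eta)$, hence within $L$ of both $\alpha$ and $\beta$; a short tree argument then shows it lies within $L$ of the common initial segment $\alpha \cap \beta$, so picking $u \in \alpha \cap \beta$ with $d(f(v), u) \leq L$ and using the fact that heights are $1$-Lipschitz gives $h'(v') \geq h'(u) \geq h'(f(v)) - L = h(v) - L$.

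For the upper bound, fix a small $\epsilon > 0$ and choose $x \in \alpha$, $y \in \beta$ at height $h'(v') + \epsilon$, so that $d(x,y) = 2\epsilon$. Pick points $f(r) \in f(\gamma)$ and $f(s) \in f(\eta)$ within distance $L$ of $x$ and $y$ respectively; since $f$ is height-preserving and heights are $1$-Lipschitz, $h(r), h(s) \geq h'(v') + \epsilon - L$. The rough isometry inequality combined with the tree identity $d(r,s) = h(r) + h(s) - 2h(v)$ yields
\[ 2(h'(v') + \epsilon - L) - 2h(v) \leq d(r,s) \leq d(f(r), f(s)) + A \leq 2\epsilon + 2L + A, \]
so $h'(v') \leq h(v) + 2L + A/2$. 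Finally, the case $\alpha = \beta$ is ruled out: equality would give $d_H(f(\gamma), f(\eta)) \leq 2L$, forcing $d_H(\gamma, \eta) \leq 2L + A$ via the rough isometry inequality, contradicting the fact that distinct rays in $R(p,q)$ have infinite Hausdorff distance. Hence $v'$ is always a well-defined point of finite height.

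The main technical hurdle is the upper bound: in the quasiisometry setting of Farb and Mosher this step is handled via a coarse inverse, but since we only have a rough isometric embedding, we must use the rough isometry inequality directly to transfer the short distance $d(x,y) = 2\epsilon$ in $R(p',q')$ back through $f$ itself to a controlled distance in $R(p,q)$, and then exploit the tree identity to extract the constraint relating $h(v)$ and $h'(v')$.
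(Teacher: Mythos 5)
Your proof is correct and follows essentially the same route as the paper: reduce both metrics to meeting heights via $\rho(\gamma,\eta)=e^{-h(\gamma\wedge\eta)}$, use \Cref{chpcop} to assume $f$ is height-preserving, and bound $|h(\gamma\wedge\eta)-h'(\gamma'\wedge\eta')|$ from both sides by pushing the meet point forward under $f$ and pulling the target meet point back through the rough isometry inequality. The only quibbles are cosmetic: a point within $L$ of both $\alpha$ and $\beta$ need only be within roughly $3L$ of $\alpha\cap\beta$ (not $L$), and $d(r,s)=h(r)+h(s)-2h(v)$ is in general only the inequality $\geq$, but both slips are in the harmless direction and do not affect the uniform constant.
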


\begin{proof}
Write $g = \partial [f]$. Let $\rho$ denote the metric on $\Z(p,q)$ and let $\rho'$ denote the metric on $\Z(p,q)$. Let $\gamma, \eta \in \Z(p,q)$ and write $\gamma' = g(\gamma), \eta' = g(\eta)$. By \Cref{chpcop}, we know that $f$ is at bounded distance from a height-preserving rough isometric embedding. So, without loss of generality, we can assume that $f$ is height-preserving. We know the following
\[ \rho(\gamma, \eta) = q^{-\frac{h(\gamma \wedge \eta)}{\log(q)}} = e^{-h(\gamma \wedge \eta)} \]
Similarly, 
\[ \rho'(\gamma', \eta') = (q')^{-\frac{h(\gamma' \wedge \eta')}{\log(q')}} = e^{-h(\gamma' \wedge \eta')} \]
So we just need to show that $\absval{h(\gamma \wedge \eta) - h(\gamma' \wedge \eta')} \leq C$ where $C$ is some constant that does not depend on $\gamma, \eta$. We know that there exists some constant $B \geq 0$ such that $d_H(f(\gamma), \gamma') \leq B$ and $d_H(f(\eta), \eta') \leq B$. Let $x \in \gamma$, $y \in \eta$ be such that $d(f(x), \gamma' \wedge \eta') \leq B$ and $d(f(y), \gamma' \wedge \eta') \leq B$. Then $d(f(x), f(y)) \leq 2B$ and so $d(x,y) \leq 2B + A$ where $A$ is the rough isometry constant of $f$. Consequently, $d(x, \gamma \cap \eta) \leq 2B + A$ and so 
\[ h(\gamma \wedge \eta) \geq h(x) - (2B + A) \geq h(\gamma' \wedge \eta') - (3B + A) \]
Now let $w \in \gamma'$, $z \in \eta'$ be such that $d(w, f(\gamma \wedge \eta)) \leq B$ and $d(z, f(\gamma \wedge \eta)) \leq B$. Then $d(w,z) \leq 2B$ and so $d(w, \gamma' \cap \eta') \leq 2B$. Hence
\[ h(\gamma' \wedge \eta') \geq h(w) - 2B \geq h(\gamma \wedge \eta) - 3B \]
and we are done.
\end{proof}

Before defining $\Delta: \mathcal{D} \rightarrow \mathcal{C}$, we need some more terminology.

\begin{itemize}
    \item By a \textit{clone} in $\Z(p,q)$ we mean a subset $C \subseteq \Z(p,q)$ consisting of all words beginning with some fixed word. Suppose we have some subset $S \subseteq \Z(p,q)$. The \textit{minimal clone} containing $S$ is the clone containing $S$ of minimal diameter. \item Using an arbitrary edge labelling of $R(p,q)$, we can identify a vertex $v$ of $R(p,q)$ with a finite word $u$ in the alphabet $\{ 0, ..., p-1\}$. Denote by $C(v)$ the clone consisting of all words beginning with $u$. In this manner, we have a bijection between the vertices of $R(p,q)$ and the clones of $\Z(p,q)$. 
\end{itemize}

\begin{definition*}
We set $\Delta \Z(p,q) = R(p,q)$. Suppose we have a bilipschitz embedding $g: \Z(p,q) \rightarrow \Z(p',q')$. Let $v \in V(R)$ be some vertex of $R(p,q)$. We have that $gC(v) \subseteq \Z(p',q')$ is contained in some minimal clone $C(w) \subseteq \Z(p',q')$ where $w \in V(R')$. We set $f(v) = w$. Thus, we have constructed a map $f: V(R) \rightarrow R(p',q')$. Precomposition with the nearest point projection map gives us a map $f: R(p,q) \rightarrow R(p',q')$. We set $[f] = \Delta g$. 
\end{definition*}

\begin{proposition} \label{prop1}
Let $g: \Z(p,q) \rightarrow \Z(p',q')$ be a $\lambda$-bilipschitz embedding. Let $[f] = \Delta g$ where $f: V(R) \rightarrow R(p',q')$ is the map defined above. Then $f$ is an order-preserving rough isometric embedding with associated constant depending only on $\lambda$. 
\end{proposition}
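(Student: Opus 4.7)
The plan is to exploit that $\Z(p,q)$ is ultrametric: every clone $C(v)$ is a closed ball of diameter $e^{-h(v)}$, and since the metric takes only the discrete values $q^{-n}$, the minimal clone containing any bounded subset $S \subseteq \Z(p,q)$ has diameter exactly $\diam(S)$ (both equal $q^{-N}$ for $N$ the length of the longest common prefix of $S$). In particular $\diam(C(f(v))) = \diam(gC(v))$, and more generally $\diam(C(f(v) \wedge f(w))) = \diam(gC(v) \cup gC(w))$. This is the bridge that converts the multiplicative bilipschitz inequality for $g$ into the additive height estimates needed for $f$.

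With this in hand, I would first check that $f$ is order-preserving: if $y \in \tau(x)$ with $x, y \in V(R)$, then $C(y) \subseteq C(x)$, so $gC(y) \subseteq gC(x) \subseteq C(f(x))$, and minimality of $C(f(y))$ forces $C(f(y)) \subseteq C(f(x))$, i.e.\ $f(y) \in \tau(f(x))$. Next I would deduce the height bound $|h(f(v)) - h(v)| \leq \log \lambda$ by applying the $\lambda$-bilipschitz inequality to pairs realizing $\diam(C(v)) = e^{-h(v)}$ and invoking the bridge to equate $\diam(gC(v))$ with $e^{-h(f(v))}$.

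The heart of the argument is an analogous meet bound $|h(f(v) \wedge f(w)) - h(v \wedge w)| \leq \log \lambda$. For $u := v \wedge w \notin \{v, w\}$, the clones $C(v), C(w)$ are disjoint subclones of $C(u)$, and the ultrametric structure forces $\rho(a, b) = e^{-h(u)}$ for \emph{every} pair $a \in C(v), b \in C(w)$. Bilipschitzness then sandwiches $\diam(gC(v) \cup gC(w))$ between $\lambda^{-1} e^{-h(u)}$ and $\lambda e^{-h(u)}$, and the bridge identifies this with $e^{-h(f(v) \wedge f(w))}$; the degenerate case $u \in \{v, w\}$ is absorbed into the order-preservation statement. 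Combining the height and meet bounds with the tree distance formula $d(v, w) = h(v) + h(w) - 2 h(v \wedge w)$ (applied in both $R$ and $R'$) yields $|d(f(v), f(w)) - d(v, w)| \leq 4 \log \lambda$ on $V(R)$. Precomposing with nearest-point projection adds at most $\log q$, giving the extended map the claimed rough isometric embedding property.

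The only subtle point --- and the main obstacle worth flagging --- is the bridge observation that the minimal clone containing a set has the same diameter as the set. This is genuinely a feature of ultrametricity: the discreteness of the distance spectrum allows the set-diameter to be attained by a pair whose first disagreement is at the earliest position after the common prefix, so that set and minimal clone share the same defining prefix length. Once this is established, the remainder is a direct translation between the multiplicative language of the bilipschitz inequality and the additive language of tree heights.
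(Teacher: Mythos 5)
Your proposal is correct and takes essentially the same route as the paper: order-preservation via minimality of clones, and conversion of the multiplicative bilipschitz control on clone diameters into additive distance estimates, with your ``bridge'' (the minimal clone containing a set has the same diameter as the set) being exactly the fact the paper uses implicitly when it writes $\diam(C(w)) = \diam(gC(v))$. The only difference is organizational: you package everything as height and meet bounds fed into $d(v,w) = h(v) + h(w) - 2h(v \wedge w)$, whereas the paper runs a case analysis on descent relations and handles the general case via the auxiliary vertex $W = f(v \wedge v')$; the content is the same.
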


\begin{proof}
Suppose $v, v'$ are vertices of $R = R(p,q)$ such that $v'$ descends from $v$. We want to show that $f(v')$ descends from $f(v)$. Write $w = f(v)$ and $w' = f(v')$. Since $C(v') \subseteq C(v)$ we have that $gC(v') \subseteq gC(v)$. Therefore $gC(v') \subseteq C(w)$. Thus, by the minimality of $C(w')$, we have $C(w') \subseteq C(w)$. So $w'$ descends from $w$. Hence, $f$ is order-preserving.

We will now show that $f$ is a rough isometric embedding. Let $v, v' \in V(R)$ and, as before, write $w = f(v)$, $w' = f(v')$. We have three cases to consider: $v'$ descends from $v$, $v$ descends from $v'$ and neither descends from the other. Suppose $v'$ descends from $v$. Say, $d(b,v) = k \log(q_i)$ and $d(b,v') = l\log(q)$ where $k,l$ are non-negative integers with $k \leq l$. Then $d(v,v') = (l-k)\log(q)$. We also know that $\diam(C(v)) = q^{-k}$ and $\diam(C(v')) = q^{-l}$. Thus
\[ d(v, v') = \log(\frac{\diam(C(v))}{\diam(C(v'))})\]
Similarly, since $w'$ descends from $w$, we have that 
\[ d(w, w') = \log(\frac{\diam(C(w))}{\diam(C(w'))})\]
By the minimality of $C(w')$ and $C(w)$ we know that $\diam(C(w')) = \diam(gC(v'))$ and $\diam(C(w)) = \diam(gC(v))$. Therefore
\[ d(w, w') = \log(\frac{\diam(gC(v))}{\diam(gC(v'))}) \leq \log(\lambda^2 \frac{\diam(C(v))}{\diam(C(v'))}) = d(v,v') + 2\log(\lambda)\]
and similarly
\[ d(w,w') \geq \log(\frac{1}{\lambda^2} \frac{\diam(C(v))}{\diam(C(v'))}) = d(v,v') - 2\log(\lambda)\]
Here we have used the fact that $g$ is $\lambda$-bilipschitz and so only changes diameters by factors in the interval $[\lambda^{-1}, \lambda]$. So we are done in the case when $v'$ descends from $v$. By symmetry this also proves the case when $v$ descends from $v'$. Now suppose that neither descends from the other. Let $W = f(v \wedge v')$. Some notation: if $A,B \in \R_{>0}$ we write $A \sim_\lambda B$ if there exists some positive constant $K = K(\lambda)$ such that $\frac{B}{K} \leq A \leq KB$. Further, if $x,y \in \R$, we write $x \approx_\lambda y$ if there exists some real constant $D = D(\lambda)$ such that $\absval{x - y} \leq D$. We have that
\begin{align*}
\diam(C(w \wedge w')) &= \diam(gC(v) \cup gC(v')) \\
&\sim_\lambda \diam(C(v) \cup C(v')) \\
&= \diam(C(v \wedge v')) \\
&\sim_\lambda \diam(C(W))
\end{align*}
And so $\diam(C(W)) \sim_\lambda \diam(C(w \wedge w'))$. Since $w \wedge w'$ descends from $W$ this implies that 
\[  d(w \wedge w', W) = \log(\frac{\diam(C(W))}{\diam(C(w \wedge w'))}) \approx_\lambda 0\]
Also, note that our above work on the case when $v'$ descends from $v$ implies that $d(w,W) \approx_{\lambda} d(v,v \wedge v')$ and $d(W,w') \approx_{\lambda} d(v \wedge v', v')$. Therefore
\begin{align*}
d(w, w') &= d(w, w \wedge w') + d(w \wedge w', w') \\
&\approx_{\lambda} d(w, W) + d(W, w') \\
&\approx_{\lambda} d(v, v \wedge v') + d(v \wedge v', v') \\
&= d(v,v')
\end{align*}
Hence, $f$ is an $A$-rough isometric embedding for some $A \geq 0$ where $A$ only depends on $\lambda$.
\end{proof}

The following proposition proves \Cref{isothm}.

\begin{proposition} \label{prop2}
$\partial: \mathcal{C} \rightarrow \mathcal{D}$ and $\Delta: \mathcal{D} \rightarrow \mathcal{C}$ are mutually inverse functors.
\end{proposition}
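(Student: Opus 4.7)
The plan is to split the work into three parts: (i) showing $\partial$ and $\Delta$ are mutually inverse on objects (immediate from the definitions $\partial R(p,q) = \Z(p,q)$ and $\Delta\Z(p,q) = R(p,q)$), (ii) checking functoriality of each construction, and (iii) verifying that the two compositions act as the identity on morphisms. Functoriality of $\partial$ reduces to the observation that if $f,h$ are rough isometric embeddings then $h \circ f(\gamma)$ is at bounded Hausdorff distance from $h(\partial[f](\gamma))$, which is in turn at bounded Hausdorff distance from $\partial[h](\partial[f](\gamma))$; uniqueness of the limit ray then forces $\partial[h \circ f] = \partial[h] \circ \partial[f]$. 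Functoriality of $\Delta$ is a short diameter comparison: for bilipschitz $k, g$, the minimal clones containing $k C(f(v))$ and $k g C(v)$ are nested (the former contains the latter) and have comparable diameter $\asymp \diam C(v)$, so they differ by only $O(1)$ levels; hence the vertex maps of $\Delta k \circ \Delta g$ and $\Delta(k \circ g)$ agree up to bounded error.

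For $\partial \Delta g = g$, let $g$ be a $\lambda$-bilipschitz embedding with $f$ the vertex representative of $\Delta g$. Fix a ray $\gamma \in \Z(p,q)$ passing through vertices $v_0, v_1, \dots$. I would first show that $f(v_{k+1})$ descends from $f(v_k)$ (using $g C(v_{k+1}) \subseteq gC(v_k) \subseteq C(f(v_k))$ together with minimality of $C(f(v_{k+1}))$) and that $h(f(v_k)) \to \infty$ (since $\diam C(f(v_k)) = \diam gC(v_k) \leq \lambda q^{-k}$). The vertices $f(v_k)$ therefore lie on a unique infinite ray $\gamma' \in \Z(p',q')$. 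Since $g(\gamma) \in gC(v_k) \subseteq C(f(v_k))$ for every $k$, the element $g(\gamma)$ passes through every $f(v_k)$, which forces $g(\gamma) = \gamma'$. The uniqueness built into the definition of $\partial[f]$ then yields $\partial[f](\gamma) = \gamma' = g(\gamma)$.

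For $\Delta \partial [f] = [f]$, let $f$ be an $A$-rough isometric embedding, which by \Cref{chpcop} I may assume is height-preserving. Write $g = \partial[f]$ and let $\tilde f$ be the vertex map associated to $g$ through the minimal-clone construction. For any vertex $v$, set $w = \tilde f(v)$; the equality $\diam C(w) = \diam gC(v)$ combined with the bilipschitz constant of $g$ from \Cref{prop} yields $|h(w) - h(v)| = O(1)$. Converting this height comparison into genuine closeness in the tree metric is the key step: pick any ray $\gamma \in C(v)$. Then $g(\gamma) \in gC(v) \subseteq C(w)$, so $w$ lies on the ray $g(\gamma)$; and the bound $d_H(f(\gamma), g(\gamma)) \leq B$ (with $B$ depending only on the rough isometry constants) supplies a point $p \in g(\gamma)$ with $d(f(v), p) \leq B$. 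Both $p$ and $w$ lie on the single geodesic ray $g(\gamma)$ emanating from $b'$, so $d(p, w) = |h(p) - h(w)| \leq |h(p) - h(v)| + |h(v) - h(w)| \leq B + O(1)$. Hence $d(f(v), w)$ is bounded uniformly in $v$, giving $[\tilde f] = [f]$.

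The main obstacle, as in most boundary-bulk correspondences, is precisely the last step: converting a height-comparison into a full tree-distance comparison. The device that resolves it is to fix a single ray $\gamma$ through $v$ and use $g(\gamma) \subseteq R(p',q')$ as a scaffold that simultaneously contains $w$ and passes within $B$ of $f(v)$, which collapses the distance computation to a one-dimensional height estimate. Beyond this the verifications are routine.
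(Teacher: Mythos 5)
Your proof is correct and, for the two composite identities $\partial\Delta g = g$ and $\Delta\partial[f]=[f]$, follows essentially the same route as the paper: a nested sequence of clones of shrinking diameter for the first, and for the second the same scaffold device of placing both $w$ and a point within $B$ of $f(v)$ on the single ray $g(\gamma)$, so that the tree distance collapses to a height comparison. The one genuine divergence is functoriality of $\Delta$: you prove it directly by comparing the nested minimal clones containing $kC(f(v))$ and $kgC(v)$ and noting their diameters are comparable, whereas the paper deduces it formally from functoriality of $\partial$ together with the two inverse identities, via $\Delta(g'\circ g)=\Delta(\partial\Delta g'\circ\partial\Delta g)=\Delta\partial(\Delta g'\circ\Delta g)=\Delta g'\circ\Delta g$. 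Your argument is self-contained and gives an explicit bound; the paper's is shorter but only available once both inverse identities are established. A second, minor difference: you obtain $\absval{h(v)-h(w)}=O(1)$ directly from $\diam(C(w))=\diam(gC(v))$ being comparable to $\diam(C(v))$, while the paper routes this through a choice of two rays $\gamma,\eta\in C(v)$ with $\gamma\wedge\eta=v$, $\gamma'\wedge\eta'=w$ and the argument of \Cref{prop}; your version is the cleaner of the two.
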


\begin{proof}
Let $g: \Z(p,q) \rightarrow \Z(p',q')$ be a $\lambda$-bilipschitz embedding and let $[f] = \Delta g$. We will first show that $\partial [f] = g$. Let $(a_n) \in \Z(p,q)$, let $\gamma$ be the corresponding geodesic ray and suppose that $(v_n)$ is the sequence of vertices traversed by $\gamma$. Write $w_n = f(v_n)$. We know that $(a_n) \in C(v_n)$ and $C(v_n) \supseteq C(v_{n+1})$ for all $n$. Hence $g((a_n)) \in gC(v_n)$ for all $n$. Further, $gC(v_n) \subseteq C(w_n)$ and so $g((a_n)) \in C(w_n)$ for all $n$. So we have
\[ C(w_1) \supseteq C(w_2) \supseteq C(w_3) \supseteq ... \ni g((a_n))\]
where we have used the fact that $f$ preserves order. We know that $\diam(C(v_n)) \rightarrow 0$ and so $\diam(C(w_n)) = \diam(gC(v_n)) \rightarrow 0$ since $g$ is a bilipschitz embedding. Therefore $g((a_n))$ is contained in a decreasing sequence of clones; there can only be one element contained in all of them. Let $\gamma'$ be the geodesic ray joining together the vertices $w_n$. Since $f(v_n) = w_n$, it follows from the Morse lemma that $d_H(f(\gamma), \gamma') < \infty$ and so $\partial [f](\gamma) = \gamma'$. But $\gamma' \in C(w_n)$ for all $n$ and so $\gamma' = g((a_n))$. Hence $\partial \Delta g = g$. 

We will now prove that if we have an $A$-rough isometric embedding $f: R(p,q) \rightarrow R(p',q')$ then $\Delta \partial [f] = [f]$. By \Cref{chpcop}, we can assume that $f$ is height-preserving. Let $v \in V(R)$. Suppose $C(w)$ is the minimal clone such that $(\partial [f]) C(v) \subseteq C(w)$. Then $\Delta (\partial [f]) (v) = w$. We want to show that $w$ is at some uniformly bounded distance from $f(v)$. By the minimality of $w$, there must exist $\gamma, \eta \in C(v)$ such that $\gamma \wedge \eta = v$ and $\gamma' \wedge \eta' = w$ where $\gamma' = \partial [f](\gamma)$, $\eta' = \partial [f] (\eta)$. By the same argument as in \Cref{prop}, it follows that $\absval{h(v) - h(w)}$ is uniformly bounded. Let $z \in \gamma'$ be such that $d(z, f(v)) \leq B$. Then clearly $\absval{h(z) - h(v)} \leq B$. So
\[ d(f(v), w) \leq d(f(v), z) + d(z, w) \leq B + \absval{h(z) - h(w)} \leq 2B + \absval{h(v) - h(w)}\]
which is uniformly bounded.

Finally, observe that $\Delta$ is a functor because $\partial$ is
\[ \Delta(g' \circ g) = \Delta(\partial\Delta g' \circ \partial\Delta g) = \Delta\partial(\Delta g' \circ \Delta g) = \Delta g' \circ \Delta g \qedhere\] 
\end{proof}

\subsection{From trees to treebolic}

In this section we will prove that (A3) $\implies$ (A5) and (B2) $\implies$ (B4). Recall that $T(p,q)$ is the infinite regular tree with vertex valency $p+1$ and edge length $\log(q)$. We can view $T(p,q)$ as two copies of $R(p,q)$ connected together at the basepoints by a line of length $\log(q)$. With this in mind, write $T(p,q) = R_1 \sqcup R_2 \sqcup L$ where $R_1,R_2$ are isometric to $R(p,q)$ and $L$ is the open line of length $\log(q)$. We let $b_1$ denote the basepoint of $R_1$ and let $b_2$ denote the basepoint of $R_2$. We can also write $T(p',q') = R_1' \sqcup R_2' \sqcup L'$ where $R_1', R_2',L',b_1',b_2'$ are defined similarly.

\begin{proposition} \label{treethm}
A rough isometric embedding $R(p,q) \rightarrow R(p',q')$ induces a rough isometric embedding $T(p,q) \rightarrow T(p',q')$. Further, if $R(p,q)$ is rough isometric to $R(p',q')$ then $T(p,q)$ is rough isometric to $T(p',q')$.
\end{proposition}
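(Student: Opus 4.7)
My plan is to build the rough isometric embedding $F : T(p,q) \to T(p',q')$ by hand, using $f$ separately on each of the two copies $R_1, R_2$ of $R(p,q)$ and interpolating on the middle segment $L$. First I would invoke \Cref{chpcop} to replace $f$ by a height-preserving rough isometric embedding at bounded distance from it; since both basepoints have height $0$, this forces $f(b) = b'$, where $b$ and $b'$ denote the basepoints of $R(p,q)$ and $R(p',q')$.

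Next, fix canonical isometries $\phi_i : R(p,q) \to R_i$ with $\phi_i(b) = b_i$ and $\phi_i' : R(p',q') \to R_i'$ with $\phi_i'(b') = b_i'$, and define $F$ by $F|_{R_i} = \phi_i' \circ f \circ \phi_i^{-1}$ on each half and by the affine map $t \mapsto t \cdot \log(q')/\log(q)$ on $L \to L'$ (parametrised so that $b_i \mapsto b_i'$). The equalities $F(b_i) = b_i'$, which follow from the height preservation of $f$, are what stitch the three pieces together consistently.

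The verification that $F$ is a rough isometric embedding is a routine case split on which of the three components $x,y$ lie in. For $x, y$ in the same $R_i$, the subtree $R_i$ is convex in $T(p,q)$ and the analogous statement holds for $R_i'$, so the estimate reduces to the rough embedding property of $f$ and gives error at most $A$. For $x \in R_1$ and $y \in R_2$ the distances decompose as $d(x,y) = d(x, b_1) + \log(q) + d(b_2, y)$ and $d(F(x), F(y)) = d(F(x), b_1') + \log(q') + d(b_2', F(y))$; since $f$ is height-preserving and fixes basepoints, the first and third summands match exactly, and the error collapses to the constant $\absval{\log(q') - \log(q)}$. The mixed cases involving the segment $L$ are handled the same way using the affine parametrisation. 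Altogether $F$ is a $(1,C)$-quasiisometric embedding with $C$ depending only on $A$, $\log(q)$ and $\log(q')$.

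For the second claim, if $f$ is additionally a rough isometry then it is coarsely surjective, so each $F|_{R_i}$ is coarsely dense in $R_i'$; since $F|_L$ maps onto $L'$, it follows that $F(T(p,q))$ is coarsely dense in $T(p',q')$ and $F$ is itself a rough isometry. The main obstacle is really just bookkeeping: all the conceptual work is absorbed by \Cref{chpcop}, which lets us assume $f$ is height-preserving and therefore sends basepoint to basepoint, after which the two halves $R_1, R_2$ can be independently embedded with compatible basepoints without distorting the distances that cross through $L$.
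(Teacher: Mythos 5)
Your proof is correct and follows essentially the same route as the paper's: decompose $T(p,q) = R_1 \sqcup R_2 \sqcup L$, apply $f$ on each half, and verify the rough isometry inequality by cases according to where $x,y$ lie. The only difference is cosmetic — you first normalise $f$ to be height-preserving via \Cref{chpcop} so that $f(b)=b'$ and the cross-term summands match exactly, whereas the paper keeps the original $f$ and absorbs $\kappa = d(b', f(b))$ into the additive constant; note that because of this normalisation your final constant in fact also depends on $d(b', f(b))$ and not only on $A$, $\log(q)$, $\log(q')$, which is harmless since the proposition makes no uniformity claim.
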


\begin{proof}
Suppose we have an $A-$rough isometric embedding $f: R(p,q) \rightarrow R(p',q')$. Let $V(T)$ be the vertex set of $T(p,q)$, let $V_1$ be the vertex set of $R_1$, let $V_2$ be the vertex set of $R_2$ and let $V$ be the vertex set of $R(p,q)$. We define a map $F: V(T) \rightarrow T(p',q')$ as follows. If $v \in V_1$ then set $F(v) = f(v)$ where we have identified $V_1$ with $V$ and $R(p',q')$ with $R_1'$. If $v \in V_2$ then set $F(v) = f(v)$ where we have now identified $V_2$ with $V$ and $R(p',q')$ with $R_2'$. We need to prove that $F$ is a rough isometric embedding. If $v,w \in V_1$ or $v,w \in V_2$, then $F$ roughly preserves the distance between $v$ and $w$ since $f$ does. Write $\kappa = d(b' f(b))$. If $v \in V_1$ and $w \in V_2$ then we have that 
\begin{align*}
d(F(v), F(w)) &= d(F(v), b_1') + \log(q') + d(b_2', F(w)) \\
&= d(f(v), b') + \log(q') + d(b', f(w)) \\
&\leq d(f(v), f(b)) + \log(q') + d(f(b), f(w)) + 2\kappa \\
&\leq d(v,b) + \log(q) + d(b,w) + 2\kappa + 2A\\
&= d(v,w) + 2\kappa + 2A
\end{align*}
Similarly, $d(F(v), F(w)) \geq d(v,w) - 2\kappa - 2A$. So $F$ is a rough isometric embedding. Note that if $f$ is a rough isometry then so is $F$. 
\end{proof}

\begin{proposition} \label{hthm}
Suppose we have some fixed height functions on $T(p,q)$ and $T(p',q')$ determined by the geodesic rays $\gamma$ and $\gamma'$ respectively. If there exists an $A$-rough isometric embedding $f: T(p,q) \rightarrow T(p',q')$ then there exists a coarsely height-preserving $A$-rough isometric embedding $g: T(p,q) \rightarrow T(p',q')$. Further, if $f$ is a rough isometry then so is $g$. 
\end{proposition}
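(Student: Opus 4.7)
The plan is to compose $f$ with isometries of the target $T(p',q')$: first an automorphism $\phi_1$ aligning the endpoint of $f(\gamma)$ at infinity with that of $\gamma'$, then a translation $\phi_2$ along $\gamma'$ cancelling the resulting additive height discrepancy. Both operations preserve the $A$-rough isometric embedding property (and coarse surjectivity, in the rough isometry case), so the resulting map $g = \phi_2 \circ \phi_1 \circ f$ remains an $A$-rough isometric embedding (or rough isometry). I exploit that $T(p',q')$ is a homogeneous regular tree, so its isometry group acts transitively on $\partial T(p',q')$, and that translations along $\gamma'$ by integer multiples of $\log q'$ are automorphisms of $T(p',q')$ shifting $h'$ by the same amount.

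\textbf{Aligning the endpoint at infinity.} The composition $f \circ \gamma : [0,\infty) \to T(p',q')$ is a $(1,A)$-quasigeodesic ray. By the Morse Lemma in trees it lies within bounded Hausdorff distance $M = M(A)$ of a unique geodesic ray $\delta \subset T(p',q')$; let $\eta \in \partial T(p',q')$ be its endpoint at infinity. Choose an isometry $\phi_1$ of $T(p',q')$ with $\phi_1(\eta) = [\gamma']$. Then $\phi_1(\delta)$ and $\gamma'$ are asymptotic rays in a tree, so they agree outside a compact set and $d_H(\phi_1(\delta), \gamma') < \infty$. Replacing $f$ by $\phi_1 \circ f$, we may assume $f(\gamma) \subseteq \mathcal{N}_{M'}(\gamma')$ for some constant $M' = M'(A)$.

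\textbf{Height matches up to an additive constant.} Write $h$ and $h'$ as Busemann functions, $h(x) = \lim_{t\to\infty}(d(x,\gamma(t)) - t)$ and similarly for $h'$; in trees both limits are eventually attained. Since $f(\gamma) \subseteq \mathcal{N}_{M'}(\gamma')$ and $f$ distorts distances by at most $A$, there is a constant $t_0 \in \R$, depending only on how $f(b)$ sits relative to $\gamma'$, such that $f(\gamma(t))$ lies within $M' + A$ of $\gamma'(t + t_0)$ for all sufficiently large $t$. Substituting into the Busemann limits and using the $(1,A)$-quasi-isometric embedding estimate $|d(f(x), f(\gamma(t))) - d(x, \gamma(t))| \leq A$ yields
\[ \bigl|\, h'(f(x)) - h(x) + t_0 \,\bigr| \leq D \]
for every $x \in T(p,q)$, where $D = D(A, M')$.

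\textbf{Cancelling the constant.} Let $\phi_2$ be a translation along $\gamma'$ by an integer multiple of $\log q'$ whose induced shift in $h'$ approximates $-t_0$ to within $\log q'$; such a $\phi_2$ is an automorphism of $T(p',q')$. Setting $g = \phi_2 \circ \phi_1 \circ f$ gives $|h'(g(x)) - h(x)| \leq D + \log q'$ for every $x$, so $g$ is coarsely height-preserving. The main obstacle is the estimate in the previous step: it uses the sharp tree property that asymptotic rays eventually coincide (so that alignment at infinity yields alignment of the full rays up to finite Hausdorff distance), combined with the $1$-Lipschitz behaviour of Busemann functions, to reduce the height discrepancy to a single additive constant.
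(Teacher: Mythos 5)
Your proof is correct and uses essentially the same idea as the paper: post-compose $f$ with an isometry of $T(p',q')$ that aligns the image of $\gamma$ with $\gamma'$, then verify height-preservation. The paper does the alignment and offset-cancellation in a single isometry (chosen to take the Morse geodesic for $f(\gamma)$ exactly onto $\gamma'$), whereas you split it into two isometries; and the paper checks the height estimate via the Gromov-product form $h(x)=d(b,x)-2\,\length(\gamma_x\cap\gamma)$ with reference back to Proposition \ref{prop}, whereas you do it directly and self-containedly via the Busemann limit — but these are equivalent formulations of the same argument.
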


\begin{proof}
Suppose $\gamma$ has basepoint $b$ and $\gamma'$ has basepoint $b'$. Let $b''$ be the vertex of $T(p',q')$ closest to $f(b)$. By the Morse lemma, there exists a unique geodesic ray $\gamma''$ based at $b''$ such that $d_H(\gamma'', f(\gamma)) \leq B$ where $B \geq 0$. Let $\psi: T(p',q') \rightarrow T(p',q')$ be some isometry taking $\gamma''$ to $\gamma'$. We claim that $g = \psi f$ is coarsely height-preserving. Let $x \in T(p,q)$. We have that
\[ d(b,x) \leq d(f(b),f(x)) + A \leq d(b'', f(x)) + A + \log(q') = d(b', g(x)) + A + \log(q') \]
Similarly, $d(b,x) \geq d(b',g(x)) - A - \log(q')$. Let $\eta$ denote the geodesic segment from $b$ to $x$. Let $\xi$ denote the geodesic segment from $b'$ to $g(x)$. We have that $d_H(\xi, g(\eta)) \leq C$ for some uniform constant $C \geq 0$ by the Morse Lemma. Let $\gamma \wedge \eta$ denote the point of $\gamma \cap \eta$ of furthest distance from $b$ and let $\gamma' \wedge \xi$ be defined similarly. Since $g(\gamma)$ is uniformly close to $\gamma'$ and $g(\eta)$ is uniformly close to $\xi$, identical arguments to those in the proof of \Cref{prop} show that the difference between $d(b, \gamma \wedge \eta)$ and $d(b', \gamma' \wedge \xi)$ is uniformly bounded. Since
\[ h(x) = d(b,x) - 2 \cdot d(b, \gamma \wedge \eta)\]
and
\[ h(g(x)) = d(b',g(x)) - 2 \cdot d(b', \gamma' \wedge \xi)\]
it follows that $g$ is coarsely height-preserving. 
\end{proof}

Recall that topologically $HT(p,q) = T(p,q) \times \R$. Let $\pi: HT(p,q) \rightarrow T(p,q)$ be the projection onto the first factor and let $\zeta: HT(p,q) \rightarrow \R$ be the projection onto the second factor. Given $x \in HT(p,q)$, we say that $\zeta(x)$ is the \textit{depth} of $x$. Further, we choose a height function on $HT(p,q)$ that agrees with the height function on $T(p,q)$: i.e. $h(\pi x) = h(x)$ for all $x \in HT(p,q)$. 

Suppose we have a coarsely height-preserving rough isometric embedding $f: T(p,q) \rightarrow T(p',q')$. There exists a unique map $\hat{f}: HT(p,q) \rightarrow HT(p',q')$ satisfying $\pi \hat{f} x = f \pi x$ and $\zeta \hat{f} x = \zeta x$ for all $x \in HT(p,q)$. We call $\hat{f}$ the \textit{horocyclic extension} of $f$. We want to show that $\hat{f}$ is a quasiisometric embedding. Consider the function
\[ D: HT(p,q) \times HT(p,q) \rightarrow \R_{\geq 0} \]
given by 
\[ D(x,y) = \min\{d(x,y'), d(y, x')\}\]
where $y'$ is the unique point with $\pi y' = \pi x$, $\zeta y' = \zeta y$ and $x'$ is the unique point with $\pi x' = \pi y$, $\zeta x' = \zeta x$. The key observations are that for $x,y \in HT(p,q)$
\begin{itemize}
    \item $d(x,y) \geq d(\pi x, \pi y)$ and $d(x,y) \geq D(x,y)$;
    \item $d(x,y) \leq d(\pi x, \pi y) + D(x,y)$.
\end{itemize}

\begin{proposition} \label{qiethm}
If $f: T(p,q) \rightarrow T(p',q')$ is a $B$-coarsely height-preserving $A$-rough isometric embedding then the horocyclic extension $\hat{f}: HT(p,q) \rightarrow HT(p',q')$ is a $(2, A + 2B)$-quasiisometric embedding. Further, if $f$ is a rough isometry then $\hat{f}$ is a quasiisometry. 
\end{proposition}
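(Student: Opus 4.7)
The plan is to combine the two given inequalities
\[
d(x,y) \leq d(\pi x, \pi y) + D(x, y), \qquad d(x,y) \geq \max\bigl(d(\pi x, \pi y),\, D(x, y)\bigr)
\]
with two coarse facts about how $\hat{f}$ acts on the $\pi$- and $D$-components. The first is that $d(\pi\hat{f}x, \pi\hat{f}y) = d(f\pi x, f\pi y)$ lies within $A$ of $d(\pi x, \pi y)$, which is immediate since $f$ is an $A$-rough isometric embedding. The second, which will be the main technical obstacle, is the estimate $|D(\hat{f}x, \hat{f}y) - D(x, y)| \leq 2B$.

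For the $D$-estimate I would use that $\hat{f}$ preserves the depth coordinate $\zeta$ exactly, so $|\zeta\hat{f}x - \zeta\hat{f}y| = |\zeta x - \zeta y|$, while the hypothesis that $f$ is $B$-coarsely height-preserving ensures that the heights of $\pi\hat{f}x, \pi\hat{f}y$ differ from those of $x, y$ by at most $B$. Noting that horocycle distance in $\hyp^2$ is monotonically decreasing in the height of the horocycle (as one sees directly in the upper half plane model), the minimum in the definition of $D(x, y)$ is realised at the higher of the two points, so $D(x, y) = 2\sinh^{-1}(|\zeta x - \zeta y|/(2 e^{\max(h(x), h(y))}))$. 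A direct hyperbolic-geometry calculation --- namely, that the map $h \mapsto 2\sinh^{-1}(\delta/(2 e^h))$ has derivative of modulus at most $2$ for each fixed $\delta \geq 0$ --- gives $|D(\hat{f}x, \hat{f}y) - D(x, y)| \leq 2B$, as the ``max-height'' shifts by at most $B$.

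With these two facts the quasiisometry bounds come out immediately. The upper bound
\[
d(\hat{f}x, \hat{f}y) \leq d(\pi x, \pi y) + A + D(x, y) + 2B \leq 2 d(x, y) + A + 2B
\]
follows from $d(\pi x, \pi y), D(x, y) \leq d(x, y)$. For the lower bound I would apply $\max(a,b) \geq (a+b)/2$:
\begin{align*}
d(\hat{f}x, \hat{f}y) &\geq \max\bigl(d(\pi x, \pi y) - A,\; D(x, y) - 2B\bigr) \\
&\geq \tfrac{1}{2}\bigl(d(\pi x, \pi y) + D(x,y)\bigr) - \tfrac{A+2B}{2} \\
&\geq \tfrac{1}{2} d(x,y) - (A + 2B),
\end{align*}
establishing the $(2, A+2B)$-quasiisometric embedding property. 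For the final assertion, if $f$ is a rough isometry it is coarsely surjective, so given any $z \in HT(p',q')$ I would pick $y \in T(p,q)$ with $d(fy, \pi z)$ bounded and take the unique $x$ with $\pi x = y$, $\zeta x = \zeta z$; since $\zeta \hat{f}x = \zeta z$ the displacement in $D$ is zero, so $D(\hat{f}x, z) = 0$ and $d(\hat{f}x, z) \leq d(\pi\hat{f}x, \pi z)$ is bounded, making $\hat{f}$ a quasiisometry.
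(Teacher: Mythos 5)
Your proposal is correct and follows essentially the same route as the paper: decompose the metric via the two inequalities relating $d$, $d\circ\pi$ and $D$, show $\hat{f}$ coarsely preserves the tree component (within $A$) and the depth component $D$ (within $2B$), and handle coarse surjectivity by matching the depth coordinate exactly. The only cosmetic difference is that you justify the bound $\absval{D(\hat{f}x,\hat{f}y)-D(x,y)}\leq 2B$ by an explicit horocyclic-distance formula and a derivative estimate, where the paper uses a comparison quadrilateral in $\hyp^2$ and the triangle inequality; both are valid and yield the stated constants.
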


\begin{proof}
First observe that $\hat{f}$ is also $B$-coarsely height-preserving:
\[ \absval{h(\hat{f}x) - h(x)} = \absval{h(\pi\hat{f}x) - h(\pi x)} = \absval{h(f\pi x) - h(\pi x)} \leq B\]

\begin{claim*}
$D$ is coarsely preserved by $\hat{f}$. That is, $\absval{D(x,y) - D(\hat{f}x, \hat{f}y)} \leq 2B$.
\end{claim*}

\begin{proof}[Proof of claim]
Let $x,y \in T(p,q)$. Suppose $\absval{h(x) - h(\hat{f}x)} = \delta \leq B$. There exists a quadrilateral in $\hyp^2$ with vertices $v_1, v_2, v_3, v_4$ such that $d(v_1, v_2) = d(x,y')$, $d(v_3, v_4) = d(\hat{f}x, \hat{f}y)$ and $d(v_2, v_3) = d(v_4, v_1) = \delta$. Hence
\[ \absval{d(x,y') - d(\hat{f}x, \hat{f}y')} \leq 2B\]
Similarly, 
\[ \absval{d(y,x') - d(\hat{f}y, \hat{f}x')} \leq 2B\]
and so $\absval{D(x,y) - D(\hat{f}x, \hat{f}y)} \leq 2B$. 
\end{proof}

We have that
\begin{align*}
d(x,y) &\leq d(\pi x, \pi y) + D(x,y) \\
&\leq d(f \pi x, f \pi y) + A + D(\hat{f}x, \hat{f}y) + 2B \\
&\leq d(\pi \hat{f} x, \pi \hat{f} y) + A + d(\hat{f}x, \hat{f}y) + 2B \\
&\leq 2 d(\hat{f}x, \hat{f}y) + A + 2B
\end{align*}
and
\begin{align*}
d(\hat{f}x,\hat{f}y) &\leq d(\pi \hat{f}x, \pi \hat{f}y) + D(\hat{f}x,\hat{f}y) \\
&\leq d(f \pi x, f \pi y) + D(x, y) + 2B \\
&\leq d(\pi x, \pi y) + A + d(x, y) + 2B \\
&\leq 2 d(x, y) + A + 2B
\end{align*}
and so $\hat{f}$ is a $(2, A+2B)$-quasiisometric embedding. Suppose now that $f$ is also $A$-coarse surjective and let $z \in HT(p',q')$. Let $x \in T(p,q)$ be such that $d(f(x), \pi z) \leq A$. Let $y \in HT(p,q)$ satisfy $\pi y = x$ and $\zeta y = \zeta z$. Then $d(\hat{f}(y), z) \leq A$ since $\hat{f}(y)$ and $z$ have the same depth. So $\hat{f}$ is also coarse-surjective. 
\end{proof}

The implications (A3) $\implies$ (A5) and (B2) $\implies$ (B4) follow from \Cref{treethm}, \Cref{hthm} and \Cref{qiethm}.

\section*{Acknowledgements}

I would like to thank my supervisor Cornelia Dru\textcommabelow{t}u for reading early drafts of this paper and providing helpful advice.

\bibliographystyle{ieeetr}
\bibliography{ref.bib}

\end{document}